\theoremstyle{plain}
\newtheorem{proposition}{Proposition}
\newtheorem{theorem}[proposition]{Theorem}
\newtheorem{lemma}[proposition]{Lemma}
\theoremstyle{definition}
\theoremstyle{definition}
\newtheorem{example}[proposition]{Example}
\newtheorem{remark}[proposition]{Remark}
\numberwithin{equation}{section}
\numberwithin{proposition}{section}
\gdef\myletter{}
\let\savetheequation\theequation
\def\theequation{\savetheequation\myletter}
\newcommand{\CC}{{\mathbb C}}
\newcommand{\RR}{{\mathbb R}}
\newcommand{\ZZ}{{\mathbb Z}}
\renewcommand{\Im}{\mbox{Im}}
\renewcommand{\Re}{\mbox{Re}}
\renewcommand{\date}{\today}
\def \bar{\overline}
\def \hat{\widehat}
\begin{document}

\vskip 3mm

\title[Polynomial Degree Defined by a Convex Body]{\bf Bernstein-Walsh theory associated to convex bodies and applications to multivariate approximation theory}

\author{L. Bos and N. Levenberg*}{\thanks{*Supported by Simons Foundation grant No. 354549}}
\subjclass{32U15, \ 32U20, \ 41A10}%
\keywords{convex body, Bernstein-Walsh, multivariate approximation}%

\maketitle
%\date
\begin{abstract} We prove a version of the Bernstein-Walsh theorem on uniform polynomial approximation of holomorphic functions on compact sets in several complex variables. Here we consider subclasses of the full polynomial space associated to a convex body $P$. As a consequence, we validate and clarify some observations of Trefethen in multivariate approximation theory.

\end{abstract}

\section{Introduction.}\label{sec:intro} A standard theorem in several complex variables, quantifying the classical Oka-Weil theorem on polynomial approximation -- which itself is the multivariate version of the classical Runge theorem for polynomial approximation in the complex plane -- is the {\it Bernstein-Walsh theorem}:

\begin{theorem} \label{basic} Let $K\subset \CC^d$ be compact, nonpluripolar and polynomially convex with $V_{K}$ continuous. 
Let $R > 1$, and let $\Omega_R := \{ z : V_{K} (z) < \log R \}$.
Let $f$ be continuous on $K$. Then
$$
  \limsup_{n\to \infty} D_n(f,K)^{1/n}\leq 1/R
$$ 
if and only if $f$ is
the restriction to $K$ of a function holomorphic in $\Omega_R$.  
\end{theorem}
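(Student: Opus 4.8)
The plan is to prove the two implications separately. The implication ``holomorphic $\Rightarrow$ fast approximation'' is the analytic core, and I would establish it via Hörmander's $L^2$ estimates for $\bar\partial$; the reverse implication is a soft telescoping argument resting on the Bernstein--Walsh inequality $|p(z)|\le\|p\|_K\,e^{(\deg p)V_K(z)}$ (which is just the defining property of $V_K$). Here $D_n(f,K):=\inf\{\|f-p\|_K:\deg p\le n\}$. I will freely use the following consequences of the hypotheses on $K$: $V_K\ge0$ on $\CC^d$ and $V_K\equiv0$ on $K$ (so $K\subset\Omega_R$ since $\log R>0$), $V_K=V_K^*$ is continuous plurisubharmonic, $\Omega_R=\{V_K<\log R\}$ is pseudoconvex, each sublevel set $\{V_K\le\log\sigma\}$ is compact, and $V_K(z)=\log|z|+O(1)$ as $|z|\to\infty$.

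For the ``only if'' direction, assume $\limsup_nD_n(f,K)^{1/n}\le1/R$, so in particular $D_n(f,K)\to0$ since $R>1$; choose for each $n$ a polynomial $p_n$ with $\deg p_n\le n$ and $\|f-p_n\|_K=D_n(f,K)$. The differences $g_n:=p_n-p_{n-1}$ satisfy $\deg g_n\le n$ and $\|g_n\|_K\le D_n+D_{n-1}$. Fix a compact $E\subset\Omega_R$, set $m:=\max_EV_K<\log R$, and pick $e^m<\sigma<\rho<R$. Then $\|g_n\|_K\le C_\rho\,\rho^{-n}$ for large $n$, and Bernstein--Walsh gives $|g_n|\le C_\rho(\sigma/\rho)^n$ on $E$; since $\sigma<\rho$, the series $\sum_ng_n$ converges uniformly on $E$. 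As $E$ was arbitrary, $p_n=p_0+\sum_{k\le n}g_k$ converges locally uniformly on $\Omega_R$ to a holomorphic function $F$, and $F|_K=\lim_np_n|_K=f$ because $\|p_n-f\|_K\to0$.

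For the ``if'' direction, write $f=F|_K$ with $F\in\O(\Omega_R)$ and fix $1<r<r'<R$. Take $\chi\in C_c^\infty(\Omega_{r'})$ equal to $1$ on an open neighborhood $U$ of the compact set $\{V_K\le\log r\}$, so that $g:=\bar\partial(\chi F)=F\,\bar\partial\chi$ is a $\bar\partial$-closed $(0,1)$-form supported in $\{V_K>\log r\}\cap\Omega_{r'}$. Applying Hörmander's estimate on $\CC^d$ with the plurisubharmonic weight $2(n+1)V_K$ (in the form carrying the factor $(1+|z|^2)^{-2}$, which obviates strict plurisubharmonicity) produces $u_n$ with $\bar\partial u_n=g$ and
$$
\int_{\CC^d}|u_n|^2\,e^{-2(n+1)V_K}\,\frac{d\lambda}{(1+|z|^2)^2}\ \le\ \int_{\CC^d}|g|^2\,e^{-2(n+1)V_K}\,d\lambda\ \le\ C_F\,r^{-2(n+1)},
$$
the last bound because $V_K>\log r$ on $\mathrm{supp}\,g$ (a compact subset of $\Omega_{r'}$ on which $F$ is bounded). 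Then $p_n:=\chi F-u_n$ is entire, and finiteness of the left-hand integral together with $V_K(z)=\log|z|+O(1)$ at infinity forces $p_n$ to be a polynomial of degree $\le n+c_d$, with $c_d$ depending only on $d$. Since $\chi\equiv1$ on $U\supset K$, we have $\bar\partial u_n=0$ on $U$, so $u_n$ is holomorphic near $K$; a Cauchy-type interior estimate on balls of radius $1/n$, combined with the continuity of $V_K$ and $V_K|_K=0$ (so that $\sup_{\mathrm{dist}(\cdot,K)\le1/n}e^{2(n+1)V_K}=e^{o(n)}$), gives $\|u_n\|_K\le C\,n^{d}\,e^{o(n)}\,r^{-(n+1)}$. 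Because $u_n=F-p_n$ on $K$, this reads $D_{n+c_d}(f,K)\le\|u_n\|_K$, whence, after reindexing, $\limsup_nD_n(f,K)^{1/n}\le1/r$; letting $r\uparrow R$ finishes the proof.

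The main obstacle is the $\bar\partial$ step, and within it the two delicate points are: (i) guaranteeing that the $L^2$ solution $u_n$ yields a genuine polynomial of degree essentially $n$ — this is exactly where the precise logarithmic growth of $V_K$ at infinity (a feature of nonpluripolar compacta) and the auxiliary $(1+|z|^2)^{-2}$ weight enter; and (ii) passing from the global weighted $L^2$ bound to a uniform bound on $K$ without destroying the geometric rate, which forces the interior estimate to be taken on balls shrinking with $n$ and relies on continuity of $V_K$ near $K$ to keep the resulting loss subexponential. The telescoping direction is routine once one records that $K\subset\Omega_R$ and that Bernstein--Walsh is built into the definition of $V_K$.
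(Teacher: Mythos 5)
Your proposal is correct, and the two halves should be judged separately. The ``only if'' (telescoping) half is essentially identical to the paper's own argument: the paper proves Theorem \ref{basic} as the case $P=\Sigma$ of Theorem \ref{BWthm}, and its proof of part (2) is the same dyadic-difference series estimated through the Bernstein--Walsh inequality on sublevel sets $\bar\Omega_\rho$, exactly as you do. The ``if'' half, however, is a genuinely different route. The paper follows Bloom and Siciak: it works with Fekete points and fundamental Lagrange interpolating polynomials for $Poly(nP)$, compares the sets $D_R$ built from these with the sublevel sets $\Omega_R$ (Lemma \ref{bloom2.6}), and gets the polynomial $G_m$ from Lemma \ref{bloom2.3} by mapping $z\mapsto (Q_1(z),\dots,Q_{d_n}(z))$ into a polydisk of $\CC^{d_n}$, extending $f$ holomorphically there (Nishino), and truncating the Taylor series; you instead solve $\overline\partial u_n=\overline\partial(\chi F)$ by H\"ormander's weighted $L^2$ estimate with weight $2(n+1)V_K$ (the $(1+|z|^2)^{-2}$ form with plain psh weight is indeed the right statement), using $V_K(z)=\log|z|+O(1)$ to force $\chi F-u_n$ to be a polynomial of degree $n+O(1)$, and continuity of $V_K$ with $V_K|_K=0$ to turn the weighted $L^2$ bound into a sup bound on $K$ with only $n^{d}e^{o(n)}$ loss; these are exactly the delicate points and you handle them correctly. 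What each approach buys: yours is shorter, avoids Fekete/interpolation machinery and the holomorphic extension theorem, and needs only standard pluripotential facts about $V_K$ (it does not even use polynomial convexity, since $V_K=V_{\hat K}$); the paper's argument is more constructive (it shows the Lagrange interpolants at Fekete points themselves achieve the geometric rate) and, more importantly, it adapts to the spaces $Poly(nP)$, where an $L^2$-weight argument with $V_{P,K}$ would have to control which monomials appear (membership of the Newton polytope in $nP$, up to a bounded dilate), not merely a degree bound -- which is precisely why the paper routes through the polydisk extension lemma rather than $\overline\partial$.
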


Here for $K\subset \CC^d$ compact,
\begin{equation}\label{vkfcn}V_K(z) = \max[0,\sup \{\frac{1}{deg (p)}\log |p(z)|: ||p||_K:=\max_{\zeta \in K}|p(\zeta)| \leq 1\}]\end{equation}
where $p$ is a nonconstant holomorphic polynomial; and for a continuous 
complex-valued function $f$ on $K$, $$
D_n(f,K) 
  := 
  \inf \{ ||f-p_n||_{K}:p_n \in \mathcal P_n \}$$
 where $ \mathcal P_n$ is the space of holomorphic polynomials of degree at most $n$. See \cite{BagLe} for a survey and history of Theorem \ref{basic} in both one and several complex variables.
 
 In Trefethen \cite{T}, the author gives some evidence for why one might consider non-traditional notions of ``degree'' of a polynomial in the setting of multivariate approximation theory. More precisely, for certain functions $f$ on $K=[-1,1]^d$ he  compares the approximation numbers $D_n(f, [-1,1]^d)$ where ``degree'' has three possible meanings: total degree, Euclidean degree, or maximum degree. We clarify this distinction in a more general setting by describing generalizations of the extremal functions $V_K$ associated to subclasses of the full polynomial space $\bigcup_n \mathcal P_n$. Given a convex body $P\subset  (\RR^+)^d=[0,\infty)^d$, following Bayraktar \cite{Bay}, we define a {\it $P-$extremal function} $V_{P,K}$ associated to $K$. In Section \ref{sec:back} we list and prove some basic properties of these functions. We state and prove a generalization of Theorem \ref{basic} in this setting in Section \ref{sec:bwthm}. Section \ref{sec:app} recovers the Trefethen cases for $K=[-1,1]^d$ by taking appropriate $P$ and provides explicit examples of functions $f$ comparing rates of approximation. 

\section{Background: $P-$extremal functions.}\label{sec:back}
 In what follows, {\it we fix a convex body $P\subset (\RR^+)^d$;} i.e., a compact, convex set in $(\RR^+)^d$ with non-empty interior $P^o$. Standard examples include the case where 
\begin{enumerate}
\item $P$ is a non-degenerate convex polytope, i.e., the convex hull of a finite subset of $(\ZZ^+)^d$ in $(\RR^+)^d$ with $P^o\not =\emptyset$; 
\item $P_p:=\{(x_1,...,x_d)\in (\RR^+)^d: (x_1^p+\cdots x_d^p)^{1/p}\leq 1\}$ is the (nonnegative) portion of an $l^p$ ball in $(\RR^+)^d$, $1\leq p\leq \infty$.
\end{enumerate}
As a particular case of (2), with $p=1$ we have $P_1=\Sigma$ where
$$\Sigma:=\{(x_1,...,x_d)\in \RR^d: x_1,...,x_d \geq 0, \ x_1+\cdots x_d\leq 1\}.$$

We will consider convex bodies $P\subset (\RR^+)^d$ with the property that 
\begin{equation}\label{phyp} \Sigma \subset kP \ \hbox{for some} \ k\in \ZZ^+.\end{equation}
Associated with $P$, following \cite{Bay}, we consider the finite-dimensional polynomial spaces 
$$Poly(nP):=\{p(z)=\sum_{J\in nP\cap (\ZZ^+)^d}c_J z^J: c_J \in \CC\}$$
for $n=1,2,...$. Here $J=(j_1,...,j_d)$. In the case $P=\Sigma$ we have $Poly(n\Sigma)=\mathcal P_n$, the usual space of holomorphic polynomials of degree at most $n$ in $\CC^d$. Clearly there exists a minimal positive integer $A=A(P)\geq 1$ such that $P\subset A\Sigma$. Thus
\begin{equation}\label{AP} Poly(nP) \subset A\mathcal P_n=\mathcal P_{An} \ \hbox{for all} \ n.\end{equation}
We let $d_n=$dim$(Poly(nP))$. From (\ref{AP}),
\begin{equation}\label{dn} d_n  \leq \hbox{dim}P_{An} = 0(n^d).\end{equation}

Note it follows from convexity of $P$ that 
$$ p_n \in Poly(nP), \ p_m \in Poly(mP) \Rightarrow p_n \cdot p_m\in Poly((n+m)P).$$
It suffices to verify this for monomials $z^A\in Poly(nP), \ z^B \in Poly(mP)$. Then $A\in nP, \ B\in mP$ 
so $A=na, \ a\in P$ and $B=mb, \ b\in P$. Thus 
$$A+B = na+mb= (n+m)[\frac{n}{n+m}a +\frac{m}{n+m}b]\in (n+m)P.$$

Recall the {\it indicator function} of a convex body $P$ is
$$\phi_P(x_1,...,x_d):=\sup_{(y_1,...,y_d)\in P}(x_1y_1+\cdots x_dy_d).$$
For the $P$ we consider, $\phi_P\geq 0$ on $(\RR^+)^d$ with $\phi_P(0)=0$.  
Define the logarithmic indicator function 
$$H_P(z):=\sup_{J\in P} \log |z^J|:=\phi_P(\log |z_1|,...,\log |z_d|).$$
Here $|z^J|:=|z_1|^{j_1}\cdots |z_d|^{j_d}$ for $J=(j_1,...,j_d)\in P$ (the components $j_k$ need not be integers). From (\ref{phyp}), we have
$$H_P(z)\geq \frac{1}{k}\max_{j=1,...,d}\log^+ |z_j|=\frac{1}{k}\max_{j=1,...,d}[\max(0,\log |z_j|)].$$
We will use $H_P$ to define generalizations of the Lelong classes $L(\CC^d)$, the set of all plurisubharmonic (psh) functions $u$ on $\CC^d$ with the property that $u(z) - \log |z| = 0(1), \ |z| \to \infty$, and 
$$L^+(\CC^d)=\{u\in L(\CC^d): u(z)\geq \log^+|z| + C_u\}$$
where $C_u$ is a constant depending on $u$. We remark that, a priori, for a set $E\subset \CC^d$, one defines the {\it global extremal function}
$$V_E(z):=\sup \{u(z): u\in L(\CC^d), \ u\leq 0 \ \hbox{on} \ E\}.$$
It is a theorem, due to Siciak and to Zaharjuta (cf., Theorem 5.1.7 in \cite{K}), that for $K\subset \CC^d$ compact, $V_K$ coincides with the function in (\ref{vkfcn}). Moreover, 
$$V_K^*(z):=\limsup_{\zeta\to z}V_K(\zeta)\in L^+(\CC^d)$$
precisely when $K$ is {\it nonpluripolar}; i.e., for $K$ such that $u$ plurisubharmonic on a neighborhood of $K$ with $u=-\infty$ on $K$ implies $u\equiv -\infty$. 

Define
$$L_P=L_P(\CC^d):= \{u\in PSH(\CC^d): u(z)- H_P(z) =0(1), \ |z| \to \infty \},$$ and 
$$L_{P,+}=L_{P,+}(\CC^d)=\{u\in L_P(\CC^d): u(z)\geq H_P(z) + C_u\}.$$
Then $L_{\Sigma} = L(\CC^d)$ and $L_{\Sigma,+} = L^+(\CC^d)$. Given $E\subset \CC^d$, the {\it $P-$extremal function of $E$} is given by $V^*_{P,E}(z):=\limsup_{\zeta \to z}V_{P,E}(\zeta)$ where
$$V_{P,E}(z):=\sup \{u(z):u\in L_P(\CC^d), \ u\leq 0 \ \hbox{on} \ E\}.$$
For $P=\Sigma$, we recover $V_E=V_{\Sigma,E}$. We will restrict to the case where $E=K\subset \CC^d$ is compact. In this case, Bayraktar \cite{Bay} proved a Siciak-Zaharjuta type theorem showing that $V_{P,K}$ can be obtained using polynomials. Note that $\frac{1}{n}\log |p_n|\in L_P$ for $p_n\in Poly(nP)$. 

\begin{proposition} \label{turgay2} Let $K\subset \CC^d$ be compact and nonpluripolar. Then 
$$V_{P,K} =\lim_{n\to \infty} \frac{1}{n} \log \Phi_n$$
pointwise on $\CC^d$ where
$$\Phi_n(z):= \sup \{|p_n(z)|: p_n\in Poly(nP),  \ ||p_n||_K\leq 1\}.$$
If $V_{P,K}$ is continuous, the convergence is locally uniform on $\CC^d$.

\end{proposition}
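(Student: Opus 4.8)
The plan is to run the classical Siciak--Zaharjuta argument in the class $L_P$, the one genuinely new point being that the approximating polynomials have to be kept inside the subspaces $Poly(nP)$. Write $u_n:=\frac1n\log\Phi_n$. For the easy inequality, if $p_n\in Poly(nP)$ and $\|p_n\|_K\le1$ then $\frac1n\log|p_n|\in L_P$ and $\frac1n\log|p_n|\le0$ on $K$, so $\frac1n\log|p_n|\le V_{P,K}$ on $\CC^d$; taking the supremum over all such $p_n$ gives $u_n\le V_{P,K}$. The product property $Poly(nP)\cdot Poly(mP)\subset Poly((n+m)P)$ verified above yields $\Phi_{n+m}\ge\Phi_n\Phi_m$, so $n\mapsto\log\Phi_n(z)$ is superadditive and Fekete's lemma gives that $\Phi:=\lim_n u_n=\sup_n u_n$ exists pointwise on $\CC^d$, with $\Phi\le V_{P,K}$. (Finiteness of $\Phi$ is automatic once one knows $V^*_{P,K}\in L_{P,+}$, which uses the nonpluripolarity of $K$ and is part of the basic theory of the $P$-extremal functions.)

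The substance is the reverse inequality $V_{P,K}(z_0)\le\Phi(z_0)$ for each fixed $z_0\in\CC^d$. Since $V_{P,K}(z_0)=\sup_u u(z_0)$, it suffices, given a competitor $u\in L_P$ with $u\le0$ on $K$, to produce for each large $n$ a polynomial $p_n\in Poly(m_nP)$ with $m_n/n\to1$, $\|p_n\|_K\le1$, and $\frac1{m_n}\log|p_n(z_0)|\ge u(z_0)-o(1)$; then $u_{m_n}(z_0)\to u(z_0)$ and hence $\Phi(z_0)\ge u(z_0)$. First reduce to $u\in L_{P,+}$ (hence $u$ locally bounded and $H_P-C\le u\le H_P+C$) by replacing $u$ with $\max(u,H_P-M)$ for $M>\max_K H_P$ and letting $M\to\infty$ at the end. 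For such $u$, apply an Ohsawa--Takegoshi-type extension from the point $z_0$ with plurisubharmonic weight comparable to $2nu$ to obtain an entire $g_n$ with $g_n(z_0)=1$ and $\int_{\CC^d}|g_n|^2e^{-2nu}(1+|z|^2)^{-M'}\,dV<\infty$ of size $\lesssim e^{-2nu(z_0)}$. Here is where $L_P$ does its work: because $u\le H_P+C$, the integral $\int|g_n|^2e^{-2nH_P}(1+|z|^2)^{-M'}\,dV$ is finite, and since $e^{-2nH_P}$ is multiradial the monomials are pairwise orthogonal against it, which forces every Taylor coefficient $c_J$ of $g_n$ with divergent integral to vanish; the set of admissible exponents is $\{J:J+\mathbf1\in nP^o\}$ up to a bounded enlargement, so an elementary inclusion for the body $P$ (using $\Sigma\subset kP$, hence $0\in P$ and $P^o\ne\emptyset$) places $g_n$ in $Poly(m_nP)$ with $m_n=n+O(1)$. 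Finally, since $u\le0$ on $K$ we get $\int_K|g_n|^2\,d\mu\lesssim e^{-2nu(z_0)}$ for a suitable measure $\mu$ on $K$, and a Bernstein--Markov-type inequality for $(K,\mu)$ (available because $K$ is nonpluripolar) turns this into $\|g_n\|_K=e^{-nu(z_0)+o(n)}$, so $p_n:=g_n/\|g_n\|_K$ does the job. Combining with the easy direction gives $V_{P,K}=\Phi$ pointwise on $\CC^d$.

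I expect the main obstacle to be exactly this construction, and within it the verification that the $L^2$ growth condition coming from $u\in L_P$ confines the extended function to (a bounded enlargement of) $Poly(nP)$: this is precisely where the two-sided estimate $u-H_P=O(1)$ defining $L_P$ — rather than a one-sided bound — is indispensable, and where the standing hypothesis $\Sigma\subset kP$ enters to control the enlargement; a secondary point requiring care is the choice of a Bernstein--Markov measure on a general nonpluripolar $K$. Finally, when $V_{P,K}$ is continuous one upgrades to locally uniform convergence by a Dini-type argument: by superadditivity the functions $u_{2^kn}$ increase in $k$ to the continuous function $V_{P,K}$ and hence converge uniformly on compacta, and the decomposition $\Phi_m\ge\Phi_{n_0}^{\lfloor m/n_0\rfloor}\,\Phi_{m\bmod n_0}$ together with the uniform boundedness on compacta of the finitely many $\Phi_r$, $0\le r<n_0$, bootstraps this to locally uniform convergence of the full sequence $u_n\to V_{P,K}$.
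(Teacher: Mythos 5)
The paper itself gives no proof of Proposition \ref{turgay2} --- it is quoted from Bayraktar \cite{Bay} --- so your argument has to stand on its own; its architecture is sound and is essentially the $L^2$-extension route used in the cited source: the trivial inequality $u_n\le V_{P,K}$, superadditivity of $\log\Phi_n$ via $Poly(nP)\cdot Poly(mP)\subset Poly((n+m)P)$ and Fekete's lemma, an Ohsawa--Takegoshi extension from $z_0$ with weight $2nu$ (the weight-independence of the OT constant is exactly what is needed; plain H\"ormander would cost $e^{-2n\inf u}$ on an annulus), confinement of the Taylor coefficients by integrability against the multiradial weight $e^{-2nH_P}(1+|z|^2)^{-M'}$, and Dini plus superadditivity for the locally uniform statement. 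A few slips there are harmless but worth fixing: only the one-sided bound $u\le H_P+C$ is what confines the coefficients (the lower bound is only used to have $u$ locally bounded and to reduce to $L_{P,+}$); the admissible exponents are those with $J+\mathbf 1$ in roughly $n\,\mathrm{int}(P)+O(M')\Sigma$, not a condition involving the polar body; and in the bootstrap from the doubling subsequence to the full sequence you need the lower bound $\Phi_r\ge 1$ (constants lie in $Poly(rP)$ since $0\in P$), not upper bounds on the finitely many $\Phi_r$.

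The one step that fails as written is the passage from the extension estimate to $\int_K|g_n|^2\,d\mu\lesssim e^{-2nu(z_0)}$ ``since $u\le 0$ on $K$,'' followed by Bernstein--Markov. The OT estimate controls a Lebesgue integral over $\CC^d$, and the compact sets of interest (e.g. $[-1,1]^d$ or $T^d$ in Section \ref{sec:app}) are Lebesgue-null, so no measure $\mu$ supported on $K$ can be bounded by that integral; moreover $u\le 0$ on $K$ says nothing about $u$ on a neighborhood of $K$, which is where the weight must be controlled. Bernstein--Markov also points the wrong way here: it converts $L^2(\mu)$ bounds into sup-norm bounds, but gives no way to produce the $L^2(\mu)$ bound you assert. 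The standard repair makes Bernstein--Markov unnecessary: for $w\in K$ apply the sub-mean value inequality on $B(w,\delta)\subset K_{2\delta}$ to get $\|g_n\|_K^2\le C_\delta\,e^{2n\sup_{K_{2\delta}}u}\,e^{-2nu(z_0)}$, normalize to obtain $\Phi(z_0)\ge u(z_0)-\sup_{K_{2\delta}}u$, and then use upper semicontinuity of $u$ together with $u\le 0$ on $K$ to get $\sup_{K_{2\delta}}u\to\max_K u\le 0$ as $\delta\to 0$, taking $n\to\infty$ before $\delta\to 0$. With that replacement (and the reduction to $u\in L_{P,+}$ you already describe) the proof is complete and consistent with the argument the paper imports from \cite{Bay}.
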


\noindent It follows that $V_{P,K}=V_{P,\hat K}$ where 
$$\hat K=\{z:|p(z)|\leq ||p||_K, \ \hbox{for all} \ p\in \bigcup_n \mathcal P_n\}$$
is the polynomial hull of $K$. Also, either $V_{P,K}^*\equiv +\infty$, which occurs if and only if $K$ is pluripolar, or $V_{P,K}^*\in L_{P,+}$. 

\begin{example} \label{torus} Let $K=T^d=\{(z_1,...,z_d):|z_1|=\cdots =|z_d|=1\}$, the unit $d-$torus in $\CC^d$. Then 
$$V_{P,T^d}(z)=H_P(z)= \max_{J\in P} \log |z^J|.$$
This is Example 2.3 in \cite{Bay}. There it is stated only for $P$ a convex polytope. We give an alternate proof in Proposition \ref{productproperty}.

\end{example}

From Proposition \ref{turgay2} we have a Bernstein-Walsh inequality.

\begin{proposition} \label{turgay4} Let $K$ be nonpluripolar. Then for $p_n\in Poly(nP)$, 
\begin{equation}\label{bwestpn} |p_n(z)|\leq ||p_n||_K\exp(nV_{P,K}(z)), \ z\in \CC^d. \end{equation}
\end{proposition}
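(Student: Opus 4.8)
The plan is to obtain \eqref{bwestpn} directly from the defining supremum that produces $V_{P,K}$, using the normalized polynomial $p_n/||p_n||_K$ as an admissible competitor. First dispose of the case $p_n\equiv 0$, for which both sides vanish. If $p_n\not\equiv 0$, then since $K$ is nonpluripolar we have $||p_n||_K>0$: were $||p_n||_K=0$, the psh function $\frac{1}{\deg p_n}\log|p_n|$ would equal $-\infty$ on $K$ without being identically $-\infty$, contradicting nonpluripolarity of $K$. Hence we may set $q_n:=p_n/||p_n||_K\in Poly(nP)$, which satisfies $||q_n||_K\le 1$.

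The one substantive point is to check that $u_n:=\frac1n\log|q_n|$ lies in $L_P(\CC^d)$. It is plurisubharmonic on $\CC^d$. Writing $q_n(z)=\sum_{J\in nP\cap(\ZZ^+)^d}c_Jz^J$ and noting that $\phi_{nP}=n\phi_P$, hence $H_{nP}=nH_P$, each monomial with $J\in nP$ obeys $\log|z^J|\le H_{nP}(z)=nH_P(z)$; summing the moduli gives $|q_n(z)|\le\big(\sum_J|c_J|\big)\exp(nH_P(z))$, so $u_n(z)-H_P(z)\le\frac1n\log\big(\sum_J|c_J|\big)$ for all $z$, which is the growth condition defining $L_P$. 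Since $|q_n|\le 1$ on $K$ we also have $u_n\le 0$ on $K$. Thus $u_n$ is one of the functions over which the supremum in
$$V_{P,K}(z)=\sup\{v(z):v\in L_P(\CC^d),\ v\le 0\ \text{on}\ K\}$$
is taken, so $\frac1n\log|q_n(z)|\le V_{P,K}(z)$ for every $z\in\CC^d$. Exponentiating and multiplying by $||p_n||_K$ yields \eqref{bwestpn}; at zeros of $p_n$ the inequality is trivial. Equivalently, taking the supremum of this bound over all competitors $q_n$ shows $\tfrac1n\log\Phi_n\le V_{P,K}$, so the estimate also reads $|p_n(z)|\le||p_n||_K\,\Phi_n(z)\le||p_n||_K\exp(nV_{P,K}(z))$, which is the sense in which it follows from Proposition \ref{turgay2}.

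There is no real obstacle here: the only step that is not bookkeeping is the membership $u_n\in L_P$, and even that reduces to the elementary scaling identity $\phi_{nP}(x)=\sup_{y\in nP}\langle x,y\rangle=n\sup_{y'\in P}\langle x,y'\rangle=n\phi_P(x)$. One should merely be careful that the inequality is asserted for $V_{P,K}$ itself and not its upper regularization $V^*_{P,K}$, which causes no difficulty since $u_n$ is literally a member of the competing family and hence is bounded above pointwise by $V_{P,K}$.
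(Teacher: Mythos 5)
Your proof is correct and is essentially the paper's own argument: the paper deduces \eqref{bwestpn} directly from the remark that $\frac{1}{n}\log|p_n|\in L_P$ for $p_n\in Poly(nP)$ together with the definition of $V_{P,K}$ (equivalently, the bound $\Phi_n\le e^{nV_{P,K}}$ from Proposition \ref{turgay2}), which is exactly your normalized-competitor argument. Your extra care with $||p_n||_K>0$ via nonpluripolarity and the check that $u_n\in L_P$ just makes explicit what the paper leaves implicit.
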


\noindent In particular, if $V_{P,K}$ is continuous, for $R > 1$
$$ 
  \Omega_R := \{ z : V_{P,K} (z) < \log R \}
$$ 
is an open neighborhood of $K$ and for $p_n\in Poly(nP)$, 
$$
  |p_n(z)| \leq ||p_n||_K R^n, 
  \qquad   z \in \Omega_R.  
$$

Many of the results in Chapter 5 of \cite{K} remain valid for $P-$extremal functions. From the definition of $V_{K,P}$ and Example \ref{torus}, we obtain:

\begin{enumerate} 
\item If $\{K_j\}$ are compact sets with $K_{j+1}\subset K_j$ and $K:=\bigcap_j K_j$, then $\lim_{j\to \infty} V_{P,K_j}=V_{P,K}$;
\item for $K$ compact, $V_{P,K}$ is lower semicontinuous;
\item for $K$ compact, if $V_{P,K}^*|_K=0$ then $V_{P,K}$ is continuous (on $\CC^d$);
\item for $K$ compact, $\lim_{\epsilon \to 0} V_{P,K_{\epsilon}}=V_{P,K}$ where $K_{\epsilon}:=\{z:{\rm{dist}}(z,K)\leq \epsilon\}$.
\end{enumerate}

For $P=\Sigma$, we say a compact set $K$ is {\it $L-$regular} if $V_K$ is continuous on $K$; i.e., if $V_K=V_K^*$ (equivalently, $V_K^*=0$ on $K$). Given a convex body $P\subset (\RR^+)^d$, we call a compact set {\it $PL-$regular} if $V_{P,K}$ is continuous on $K$; i.e., if $V_{P,K}=V_{P,K}^*$. Since $ P \subset A\Sigma$, for each $n$ we have
$$\sup\{\frac{1}{n}\log |p_n|:p_n\in Poly(nP), \ ||p_n||_K\leq 1\}$$
$$\leq \sup\{\frac{1}{n}\log |p_n|:p_n\in \mathcal P_{An}, \ ||p_n||_K\leq 1\}.$$ 
Hence $V_{P,K}(z)\leq A \cdot V_K(z), \ z\in \CC^d$. It follows that if $K$ is $L-$regular, then $K$ is $PL-$regular for any convex body $P\subset (\RR^+)^d$.  

The proofs of Propositions 5.3.12, 5.3.14 and Corollary 5.3.13 in \cite{K} carry over in this setting to show that $K=\bar D$ is $PL-$regular for $D$ a bounded open set with $C^1$ boundary. Hence for any compact set $K$, one can find a decreasing sequence of bounded open sets $\{D_j\}$ with smooth boundaries (thus $\bar D_j$ is $PL-$regular) such that $K=\bigcap \bar D_j$. 

We end this section with a result on $P-$extremal functions for product sets. This will be useful in Section \ref{sec:app}. Before proceeding we require a definition: we call a convex body $P\subset (\RR^+)^d$ a {\it lower set} if for each $n=1,2,...$, whenever $(j_1,...,j_d) \in nP\cap (\ZZ^+)^d$ we have $(k_1,...,k_d) \in nP\cap (\ZZ^+)^d$ for all $k_l\leq j_l, \ l=1,...,d$.  

\begin{proposition} \label{productproperty} Let $P\subset (\RR^+)^d$ be a lower set and let $E_1,...,E_d\subset \CC$ be compact and nonpolar. Then
\begin{equation}\label{prodprop} V^*_{P,E_1\times \cdots \times E_d}(z_1,...,z_d)=\phi_P(V^*_{E_1}(z_1),...,V^*_{E_d}(z_d)).\end{equation}

\end{proposition}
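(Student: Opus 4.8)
The strategy is to prove two inequalities. For the ``$\leq$'' direction, I would exhibit a competitor in the defining supremum for $V_{P, E_1\times\cdots\times E_d}$. Given the one-variable extremal functions $V_{E_j}^*\in L^+(\CC)$ (using nonpolarity of each $E_j$), set $u(z_1,\dots,z_d):=\phi_P(V_{E_1}^*(z_1),\dots,V_{E_d}^*(z_d))$. I must check $u\in L_P(\CC^d)$: plurisubharmonicity follows because $\phi_P$ is convex, nondecreasing in each variable on $(\RR^+)^d$ (here the hypothesis that $P\subset(\RR^+)^d$, so $P$'s supporting functional picks out nonnegative coordinates, gives monotonicity), and each $V_{E_j}^*$ is subharmonic and nonnegative; a convex nondecreasing function of psh functions is psh. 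The growth condition $u(z)-H_P(z)=O(1)$ follows from $V_{E_j}^*(z_j)=\log|z_j|+O(1)$ together with the fact that $\phi_P$ is Lipschitz (being a finite sup of linear functionals over the compact set $P$), so $\phi_P(\log|z_1|+O(1),\dots)=\phi_P(\log|z_1|,\dots)+O(1)=H_P(z)+O(1)$. Finally $u\leq 0$ on $E_1\times\cdots\times E_d$ since $V_{E_j}^*=0$ there (each $E_j$ is $L$-regular off a polar set; more carefully, $V_{E_j}^*=0$ q.e.\ on $E_j$ and one passes to $V_{E_j}$ or uses that the product of nonpolar sets behaves well—this needs a small argument) and $\phi_P(0,\dots,0)=0$ with $\phi_P\geq 0$ on $(\RR^+)^d$. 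Hence $u\leq V_{P,E_1\times\cdots\times E_d}$, and taking upper semicontinuous regularizations gives ``$\leq$''.

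For the ``$\geq$'' direction, I would use Proposition \ref{turgay2} (the Siciak--Zaharjuta description) together with the \emph{lower set} hypothesis. The point is to build, for each multi-index $J=(j_1,\dots,j_d)\in nP\cap(\ZZ^+)^d$, a polynomial in $Poly(nP)$ of the form $p_{j_1}(z_1)\cdots p_{j_d}(z_d)$ where $p_{j_l}$ is a one-variable polynomial of degree $j_l$ with $\|p_{j_l}\|_{E_l}\le 1$ nearly attaining $j_l V_{E_l}(z_l)$; the lower set property guarantees such a tensor-product monomial structure actually lies in $Poly(nP)$, because $nP$ being lower means it contains the full coordinate box below any of its lattice points, which is exactly what is needed to form arbitrary products of lower-degree one-variable polynomials. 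Then $\frac1n\log|p_{j_1}(z_1)\cdots p_{j_d}(z_d)|\approx \sum_l \frac{j_l}{n}V_{E_l}(z_l)$, and optimizing $\frac1n(j_1,\dots,j_d)$ over all lattice points in $nP$—which, as $n\to\infty$, fills out $P$ densely—recovers $\sup_{y\in P}\sum_l y_l V_{E_l}(z_l)=\phi_P(V_{E_1}(z_1),\dots,V_{E_d}(z_d))$ in the limit. Combining with Proposition \ref{turgay2} yields $V_{P,E_1\times\cdots\times E_d}\geq \phi_P(V_{E_1},\dots,V_{E_d})$ pointwise, and regularizing gives the claim.

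The main obstacle I anticipate is the ``$\geq$'' direction's bookkeeping: one must verify carefully that the lower set hypothesis really does let one realize every needed product polynomial inside $Poly(nP)$ (this is where that hypothesis is indispensable—without it the monomial support of a product $z^{J}$ need not sit in $nP$ even if its ``total shape'' does), and one must control the approximation $\frac1n\log|p_{j_l}|\to \frac{j_l}{n}V_{E_l}$ uniformly enough that the sum over $l$ and the sup over lattice points pass to the limit cleanly. The subordinate nuisance is the boundary/regularity issue: $V_{E_l}$ need not equal $V_{E_l}^*$ everywhere, so the clean product identity holds for the starred functions, and one should either invoke that $V_{E_l}=V_{E_l}^*$ q.e.\ (polar exceptional set) and that a finite union/product of such exceptional sets is still pluripolar, or first prove the identity for $L$-regular compact sets (e.g.\ closures of smooth domains, by the remarks preceding the proposition) and then pass to general $E_l$ via the decreasing-sequence property (1). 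I would also double-check the case analysis when some $V_{E_l}^*(z_l)$ is large while another is $0$, to be sure monotonicity of $\phi_P$ in each argument is being used correctly; this is automatic from $P\subset(\RR^+)^d$.
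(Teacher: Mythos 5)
Your first paragraph is essentially the paper's argument for one inequality: exhibit $u=\phi_P(V_{E_1}^*,\dots,V_{E_d}^*)$ as a competitor in $L_P(\CC^d)$ that is $\le 0$ on the product, concluding $\phi_P(V_{E_1}^*,\dots,V_{E_d}^*)\le V^*_{P,E_1\times\cdots\times E_d}$ (the paper handles the regularity issue you flag by first reducing, via properties (1) and (4), to $L$-regular $E_j$ and a $PL$-regular product, which is the cleaner of the two options you mention). The problem is your second paragraph: despite being labelled the opposite direction, it proves the \emph{same} inequality again. Exhibiting specific product polynomials $p_{j_1}(z_1)\cdots p_{j_d}(z_d)\in Poly(nP)$ with sup norm $\le 1$ on $E_1\times\cdots\times E_d$ can only give a \emph{lower} bound on $\Phi_n$, hence (via Proposition \ref{turgay2}) the bound $V_{P,E_1\times\cdots\times E_d}\ge\phi_P(V_{E_1},\dots,V_{E_d})$ -- which is exactly what you conclude, and exactly what paragraph one already gave. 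Nowhere do you prove the hard half, namely $V^*_{P,E_1\times\cdots\times E_d}\le\phi_P(V^*_{E_1},\dots,V^*_{E_d})$, and your method cannot: for that you must bound \emph{every} $f\in Poly(nP)$ with $\|f\|_{E_1\times\cdots\times E_d}\le 1$, not just well-chosen tensor products, and an arbitrary such $f$ need not factor.

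This missing half is where the real work (and the lower set hypothesis) lies in the paper. There one takes Bernstein--Markov measures $\mu_j$ on the $E_j$, expands an arbitrary $f\in Poly(nP)$ with $\|f\|\le1$ in the orthonormal product basis $p_{j}(z_1)q_{k}(z_2)\cdots$; the coefficients are inner products, hence bounded by $1$, and the lower set property of $P$ guarantees that the expansion only involves index tuples in $nP$ (triangularity of the univariate orthonormal families only lowers each index). Applying the univariate Bernstein--Walsh inequality (\ref{bwestpn}) to each factor, together with the Bernstein--Markov comparison $\|p_j\|_{E}\le M(1+\epsilon)^j\|p_j\|_{L^2(\mu)}$ and the dimension bound (\ref{dn}), yields $\frac1n\log|f|\le\phi_P(V_{E_1},\dots,V_{E_d})+o(1)+A\log(1+\epsilon)$, and Proposition \ref{turgay2} then converts this into the desired upper bound on $V_{P,E_1\times\cdots\times E_d}$. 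You would need to supply an argument of this type (or some other control of arbitrary members of $Poly(nP)$ bounded on the product) before your proof is complete; as written it establishes only one inequality, twice.
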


\begin{proof} For simplicity, we do the case $d=2$. Thus let $E,F\subset \CC$ be compact sets and let $\phi_P=\phi_P(x_1,x_2)$ be the support function of $P$. From properties (1) and (4) of $P-$extremal functions, we can assume that $E,F$ are $L-$regular in $\CC$ with $\hat E=E, \ \hat F= F$ and that $E\times F$ is $PL-$regular in $\CC^2$.

To see that 
$$\phi_P(V_{E}(z),V_{F}(w))\leq V_{P,E\times F}(z,w),$$
since $\phi_P(0,0)=0$, it suffices to show that $\phi_P(V_{E}(z),V_{F}(w))\in L_P(\CC^2)$. From the definition of $\phi_P$, 
$$\phi_P(V_{E}(z),V_{F}(w))=\sup_{(x,y)\in P} [xV_E(z)+yV_F(w)]$$
which is an upper envelope of locally bounded above plurisubharmonic functions. As $\phi_P$ is convex and $V_E, V_F$ are continuous, $\phi_P(V_{E}(z),V_{F}(w))$ is continuous. Finally, since $V_E(z)=\log |z|+0(1)$ as $|z|\to \infty$ and $V_F(w)=\log |w|+0(1)$ as $|w|\to \infty$, it follows that 
$\phi_P(V_{E}(z),V_{F}(w))\in L_P(\CC^2)$. 

To prove the reverse inequality, and hence (\ref{prodprop}), we modify the proof of Theorem 5.1.8 in \cite{K}. 
Let $\mu,\nu$ be probability measures on $E,F$ such that $(E,\mu)$ and $(F,\nu)$ are {\it Bernstein-Markov} pairs: thus, given $\epsilon >0$, there exists a positive constant $M$ such that 
\begin{equation}\label{uniBW}||t_n||_E\leq M(1+\epsilon)^n ||t_n||_{L^2(\mu)} \ \hbox{and} \ ||t_n||_F\leq M(1+\epsilon)^n ||t_n||_{L^2(\nu)}\end{equation}
for all $t_n\in \mathcal P_n(\CC)$. Any compact set $B\subset \CC$ admits a measure $\eta$ so that $(B,\eta)$ is a Bernstein-Markov pair; cf., \cite{PELD}. Let $f=f(z,w)\in Poly(nP)$ with $||f||_{E\times F}\leq 1$. Given an orthonormal basis $\{p_j=p_j(z)\}$ in $L^2(\mu)$ and $\{q_k=q_k(w)\}$ in $L^2(\nu)$ for the univariate polynomials, where deg$(p_j)=j$ and deg$(q_k)=k$, we can write 
$$f(z,w)=\sum_{(j,k)\in nP} c_{jk}p_j(z)q_k(w)$$
where
$$|c_{jk}|=|<f, p_jq_k>_{L^2(\mu\times \nu)}|\leq 1$$
for all $(j,k)\in nP$. The lower set property of $P$ implies that $p_jq_k\in Poly(nP)$ for $(j,k)\in nP$. Using (\ref{uniBW}), 
$$||p_j||_E\leq M(1+\epsilon)^j, \ ||q_k||_F\leq M(1+\epsilon)^k$$
so that, using the univariate Bernstein-Walsh estimates, i.e., (\ref{bwestpn}) for $E,F$ ($d=1$), 
$$|p_j(z)q_k(w)|\leq M^2(1+\epsilon)^{j+k}e^{jV_E(z)+kV_F(w)}$$
for all $(z,w)\in \CC^2$. Thus, using (\ref{AP}), 
$$|f(z,w)|\leq  d_nM^2(1+\epsilon)^{An}\cdot \max_{(j,k)\in nP}e^{jV_E(z)+kV_F(w)}.$$
Now 
$$\max_{(j,k)\in nP}e^{jV_E(z)+kV_F(w)}=\bigl(\max_{(j/n,k/n)\in P}e^{\frac{j}{n}V_E(z)+\frac{k}{n}V_F(w)}\bigr)^n$$
so that, since 
$$\log \bigl(\max_{(j/n,k/n)\in P}e^{\frac{j}{n}V_E(z)+\frac{k}{n}V_F(w)}\bigr)= \max_{(j/n,k/n)\in P}[{\frac{j}{n}V_E(z)+\frac{k}{n}V_F(w)}],$$
we have
$$\frac{1}{n}\log |f(z,w)|\leq \frac{1}{n}\log (d_nM^2) + A\log (1+\epsilon) + \max_{(j/n,k/n)\in P}[{\frac{j}{n}V_E(z)+\frac{k}{n}V_F(w)}]$$
$$\leq \frac{1}{n}\log (d_nM^2) + A\log (1+\epsilon) + \phi_P(V_E(z),V_F(w)).$$
The result follows, using (\ref{dn}), upon letting $n\to \infty$.

\end{proof}

\noindent In particular, this gives (another) proof of Example \ref{torus}. 

\begin{remark} \label{rmk25} For $x,y\in \RR^d$, let $x\cdot y=x_1y_1+\cdots x_dy_d$. Let 
$$P^o:=\{y\in \RR^d: \sup_{x\in P}|x\cdot y|\leq 1\}$$
be the {\it polar} of $P$ and let  
$||\cdot||_{P^o}$ be the dual norm defined by $P^o$; i.e., 
$$||(y_1,...,y_d)||_{P^o}:= \sup_{x\in P}|x\cdot y|.$$
Thus $P^o$ is the unit ball in this norm. Then we can write (\ref{prodprop}) as
\begin{equation}\label{product}V_{P,E_1\times \cdots \times E_d}(z_1,...,z_d)=||(V_{E_1}(z_1),...,V_{E_d}(z_d))||_{P^o}.\end{equation}

\end{remark}

\section{Bernstein-Walsh theorem.} \label{sec:bwthm} As in the previous section, we fix a convex body $P\subset (\RR^+)^d$. We prove a Bernstein-Walsh theorem in this setting. Given a compact set $K$, for a continuous 
complex-valued function $f$ on $K$ we define $$
  D_n = D_n(f,K,P) 
  \equiv 
  \inf \{ ||f-p_n||_{K}:p_n \in Poly(nP)\}.
$$

\begin{theorem} \label{BWthm} Let $K$ be compact and $PL-$regular. Let $R > 1$, and let $\Omega_R := \{ z : V_{P,K} (z) < \log R \}$. Let $f$ be continuous on $K$. 
\begin{enumerate}
\item If $P$ is a lower set and $f$ is
the restriction to $K$ of a function holomorphic in $\Omega_R$, then
$$
  \limsup_{n\to \infty} D_n(f,P,K)^{1/n}\leq 1/R.
$$ 
\item If $K=\hat K$, $
  \limsup_{n\to \infty} D_n(f,P,K)^{1/n}\leq 1/R$ implies $f$ is
the restriction to $K$ of a function holomorphic in $\Omega_R$.
\end{enumerate}
\end{theorem}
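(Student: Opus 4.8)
The plan is to adapt the classical proof of the Bernstein-Walsh theorem (as in \cite{BagLe} or Chapter 5 of \cite{K}) to the $P$-graded setting, using the $P$-extremal function $V_{P,K}$ and the Bernstein-Walsh inequality of Proposition \ref{turgay4} as the main tools, together with the product-set formula of Proposition \ref{productproperty} where the lower set hypothesis is needed.

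For part (1), assume $f$ extends holomorphically to $\Omega_R$. Fix $1 < r < R$; then $\bar\Omega_r$ is compact, $PL$-regular (being the closure of a bounded open set with reasonable boundary, or handled by the monotonicity property (1) of $P$-extremal functions applied to a slightly larger $r$), and $V_{P,\bar\Omega_r} = \max(0, V_{P,K} - \log r)$ on the relevant region. The key step is to produce, for each $n$, a polynomial $p_n \in Poly(nP)$ with $\|f - p_n\|_K$ decaying like $(r')^{-n}$ for $r'$ close to $r$. The standard route is via a Hankel/Cauchy-type integral: write $f$ in terms of an integral over a "boundary" $\{V_{P,K} = \log r\}$ of a Bergman-type kernel, and approximate the kernel by its partial sums in an orthonormal system of $Poly(nP)$ polynomials adapted to a Bernstein-Markov measure on $K$. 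Here is where Proposition \ref{productproperty} / the lower set property enters: one wants the truncation of the reproducing kernel expansion to still lie in $Poly(nP)$, which is exactly the multiplicative closure that the lower set condition guarantees for products of basis elements. Then Proposition \ref{turgay4} controls the growth of these polynomials off $K$, giving geometric decay of the tail at rate governed by $\log r$. Letting $r \uparrow R$ yields $\limsup D_n^{1/n} \le 1/R$. I would model this closely on the argument used to prove Proposition \ref{productproperty}, since the same Bernstein-Markov plus orthonormal-expansion machinery appears there.

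For part (2), assume $\limsup_n D_n(f,P,K)^{1/n} \le 1/R$, so for every $r < R$ there are $p_n \in Poly(nP)$ with $\|f - p_n\|_K \le C r^{-n}$ for large $n$. Then $\|p_{n+1} - p_n\|_K \le \|p_{n+1}-f\|_K + \|f - p_n\|_K = O(r^{-n})$, and $p_{n+1} - p_n \in Poly((n+1)P)$. By the Bernstein-Walsh inequality (\ref{bwestpn}), for $z \in \Omega_s$ with $s < r$ we get $|p_{n+1}(z) - p_n(z)| \le \|p_{n+1}-p_n\|_K \exp((n+1)V_{P,K}(z)) = O((s/r)^n)$, so the telescoping series $p_{n_0} + \sum_{n\ge n_0}(p_{n+1}-p_n)$ converges uniformly on compact subsets of $\Omega_s$ to a holomorphic function $F$. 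Since $\Omega_s \uparrow \Omega_R$ as $s \uparrow r \uparrow R$, and the limits agree on overlaps, $F$ is holomorphic on all of $\Omega_R$. Finally $F = f$ on $K$ because $p_n \to f$ uniformly on $K$ by hypothesis (using $\hat K = K$ and $V_{P,K} = V_{P,\hat K}$ so that $K \subset \Omega_R$ and the convergence on $K$ is genuine). Note part (2) does \emph{not} require the lower set hypothesis, consistent with the statement.

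The main obstacle will be part (1): constructing the approximating polynomials in $Poly(nP)$ with the correct decay rate. In the classical ($P = \Sigma$) case this is the substantive analytic content (Bergman kernel asymptotics, or an explicit Hankel-operator argument on $\Omega_r$); transporting it to the $P$-graded setting requires care that every truncation stays inside $Poly(nP)$ — precisely the point where the lower set property is essential and cannot be dropped — and that the estimate $V_{P,\bar\Omega_r} = V_{P,K} - \log r$ (on $\CC^d \setminus \bar\Omega_r$, say) holds, which should follow from the definition of $V_{P,\cdot}$ and continuity of $V_{P,K}$ but deserves a lemma. Part (2) is routine telescoping once (\ref{bwestpn}) is in hand.
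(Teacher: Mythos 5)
Your part (2) is correct and is essentially the paper's own argument: telescoping $p_0+\sum(p_n-p_{n-1})$ and applying the Bernstein--Walsh inequality (\ref{bwestpn}) to $p_n-p_{n-1}\in Poly(nP)$ gives local uniform convergence on $\Omega_R$ to a holomorphic $F$ with $F=f$ on $K$. The problem is part (1), which you yourself flag as ``the main obstacle'': what you give there is a plan, not a proof, and the plan as stated does not close. You never construct polynomials $p_n\in Poly(nP)$ with $\|f-p_n\|_K=O(r^{-n})$; the proposed route --- a Cauchy/Hankel integral over $\{V_{P,K}=\log r\}$ with the kernel replaced by partial sums of an orthonormal expansion for a Bernstein--Markov measure --- has no justification for \emph{geometric} decay of the truncation error. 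The Bernstein--Markov property only supplies sub-exponential factors $(1+\epsilon)^n$ comparing sup and $L^2$ norms; to get decay at rate $r^{-n}$ you would need precise off-diagonal/reproducing-kernel asymptotics adapted to $Poly(nP)$, which is not established anywhere (and in several variables there is no elementary Cauchy integral over a level set of $V_{P,K}$ that reproduces $f$). Also, your identification of where the lower-set hypothesis enters is off: multiplicative closure ($Poly(nP)\cdot Poly(mP)\subset Poly((n+m)P)$) follows from convexity alone, as shown in Section \ref{sec:back}; the lower-set condition is needed for the different containment that ``sub-multi-indices'' stay in the space (e.g.\ $p_jq_k\in Poly(nP)$ for $(j,k)\in nP$ in Proposition \ref{productproperty}, and $Poly(jnP)\subset Poly(snP)$ for $j\le s$ below).

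For comparison, the paper proves (1) by a genuinely different and fully constructive route, following Bloom: take Fekete points of order $n$ for $K$ and $Poly(nP)$, let $l_j^{(n)}$ be the fundamental Lagrange polynomials, and define $D_R=\{z:|l_j^{(n)}(z)|<R^n\}$. Lemma \ref{bloom2.6} uses $PL$-regularity (locally uniform convergence of $\frac1n\log\Phi_n$ to $V_{P,K}$ and $\Phi_n\le d_n\psi_n$) to squeeze $D_{R_1}$ between sublevel sets of $V_{P,K}$. Lemma \ref{bloom2.3} then produces $G_m\in Poly(mP)$ approximating $f$ geometrically on $\bar D_\rho$: one maps $z\mapsto S(z)=(Q_1(z),\dots,Q_{d_n}(z))$ into a polydisk of $\CC^{d_n}$, invokes Nishino's extension theorem (injectivity of $S$ comes from $\Sigma\subset nP$, so the coordinates lie in $Poly(nP)$), truncates the Taylor series of the extension $F$, and pulls back by $S$; the lower-set hypothesis is exactly what guarantees the pullback $E_s\circ S$ lies in $Poly(mP)$. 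Finally the Lagrange interpolants $L_n(f)$ inherit the geometric error since $L_n$ reproduces $Poly(nP)$ and $\|l_j^{(n)}\|_K=1$, the dimension factor $d_n=O(n^d)$ being harmless. If you want to salvage your outline you would have to supply an analogue of this extension-plus-truncation step (or a $\bar\partial$-type construction) in the $Poly(nP)$ grading; as written, part (1) is missing its substantive content.
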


\begin{proof} (2). Suppose
$$\limsup_{n\to \infty} D_n^{1/n}= 1/R$$
for some $R>1$. We show that if $p_n\in Poly(nP)$ satisfies 
$D_n=||f-p_n||_K$, then the series 
$p_0 + \sum_1^{\infty} (p_n-p_{n-1})$ converges
uniformly on compact subsets of $\Omega_R$ to a holomorphic
function $F$ which agrees with $f$ on
$K$.  To this end, choose $R'$ with 
$1< R' < R$; by hypothesis the polynomials
$p_n$ satisfy
\begin{equation}\label{f=F}
  ||f-p_n||_K \leq {M \over  {R'} ^n}, \qquad n=0,1,2,...,
\end{equation}
for some $M > 0$.  Now let $1 < \rho <  R'$, and apply 
(\ref{bwestpn}) to the polynomial $p_n - p_{n-1}\in Poly(nP)$ 
to obtain 
$$
  \sup _{\bar \Omega_{\rho}} |p_n(z) - p_{n-1} (z)|
  \leq 
  \rho^n ||p_n-p_{n-1}||_K$$
$$  \leq
  \rho^n ( ||p_n -  f||_K + ||f - p_{n-1}||_K ) 
  \leq
  \rho^n  {M ( 1 + R' ) \over {R'} ^n}.
$$
Since $\rho$ and $R'$ were arbitrary
numbers satisfying $1 < \rho < R' < R$, we
conclude that $ p_0 + \sum_1^{\infty} (p_n-p_{n-1})$ 
converges locally uniformly on $\Omega_R$ to a holomorphic function $F$. 
From (\ref{f=F}), $F = f$ on $K$.
\end{proof}

To verify (1) we will follow Bloom's reasoning in \cite{Bl}. Note that $Poly(nP)$ is a finite-dimensional complex vector space; we call its dimension $d_n$ (see Remark \ref{dn}). We begin with the key lemma. Fix $n\geq k$ where $k$ is as in (\ref{phyp}) and let $Q_1,...,Q_{d_n}$ be a basis for $Poly(nP)$. For $R>0$ define
$$D_R:=\{ z\in \CC^d: |Q_j(z)|< R^n, \ j=1,...,d_n\}.$$

\begin{lemma} \label{bloom2.3} Let $P$ be a lower set and let $f$ be holomorphic in a neighborhood of $\bar D_R$. Then for each positive integer $m$, there exists $G_m\in Poly(mP)$ such that for all $\rho \leq R$, 
$$||f-G_m||_{\bar D_{\rho}}\leq B(\rho/R)^m$$
where $B$ is a constant independent of $m$. 
\end{lemma}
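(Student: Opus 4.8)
\textbf{Proof plan for Lemma \ref{bloom2.3}.} The plan is to expand $f$ in a Taylor-type series adapted to the basis $Q_1,\dots,Q_{d_n}$ and to the ``polydisc-like'' sublevel sets $D_\rho$, in the spirit of Bloom's argument in \cite{Bl}. Since $f$ is holomorphic on a neighborhood of $\bar D_R$, and the $Q_j$ define $D_R$ as a bounded pseudoconvex (analytic polyhedron) region, one can represent $f$ on $\bar D_\rho$ for $\rho<R$ by a series whose terms are products of powers of the $Q_j$; concretely, by a Cauchy–Weil or Oka–Weil type representation on the polynomial polyhedron $D_R$ one writes $f=\sum_{\alpha} c_\alpha Q^\alpha$ where $Q^\alpha=Q_1^{\alpha_1}\cdots Q_{d_n}^{\alpha_d}$ and the coefficients decay geometrically: $|c_\alpha|\le C\,r^{-|\alpha|n}$ for any $r<R$. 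Because each $Q_j\in Poly(nP)$ and $P$ is a lower set (so products of the $Q_j$ stay in $Poly(\cdot\,P)$ by the convexity/product property established after (\ref{AP}), combined with the lower-set stability used in Proposition \ref{productproperty}), the partial sum $\sum_{|\alpha|\le L} c_\alpha Q^\alpha$ lies in $Poly(LnP)$.

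Next I would convert the geometric decay of coefficients into the claimed estimate. On $\bar D_\rho$ we have $|Q_j(z)|\le \rho^n$, hence $|Q^\alpha(z)|\le \rho^{|\alpha|n}$, so the tail satisfies
$$\Bigl\|\,f-\sum_{|\alpha|\le L} c_\alpha Q^\alpha\,\Bigr\|_{\bar D_\rho}\le \sum_{|\alpha|>L}|c_\alpha|\rho^{|\alpha|n}\le C\sum_{|\alpha|>L}\#\{\alpha:|\alpha|=j\}\,(\rho/r)^{jn},$$
and since the number of multi-indices $\alpha$ of weight $j$ grows only polynomially in $j$ while $(\rho/r)^{jn}$ decays geometrically (as $\rho<r<R$), this tail is $\le B'(\rho/r)^{Ln}$ for a constant $B'$ independent of $L$. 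Given a target degree $m$, I would choose $L=\lfloor m/n\rfloor$, set $G_m:=\sum_{|\alpha|\le L} c_\alpha Q^\alpha\in Poly(LnP)\subset Poly(mP)$, and absorb the bounded loss from $Ln\le m<(L+1)n$ and from replacing $r$ by $R$ into the constant $B$; this yields $\|f-G_m\|_{\bar D_\rho}\le B(\rho/R)^m$ uniformly for $\rho\le R$ (for $\rho$ near $R$ one first works with a slightly smaller radius and uses continuity, or simply notes the bound is trivial up to adjusting $B$).

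The main obstacle I anticipate is justifying the series representation $f=\sum_\alpha c_\alpha Q^\alpha$ with geometric coefficient control on the analytic polyhedron $D_R$ — this is exactly the technical heart of Bloom's approach and requires the Oka–Weil/Cauchy–Weil machinery on polynomial polyhedra rather than a naive one-variable Taylor expansion, because $D_R$ is cut out by $d_n$ polynomials in $d$ variables and need not be a polydisc. One must check that $\bar D_R$ is polynomially convex (immediate, as it is defined by polynomial inequalities) and that $f$, holomorphic near $\bar D_R$, admits such an expansion with $\limsup_\alpha |c_\alpha|^{1/|\alpha|}$ controlled by the ``radius'' $1/R^n$; this is standard but is the step that needs care. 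A secondary, purely bookkeeping point is confirming that all partial sums genuinely land in $Poly(mP)$: this uses the lower-set hypothesis on $P$ together with the product rule $Poly(n_1P)\cdot Poly(n_2P)\subset Poly((n_1+n_2)P)$ proved in Section \ref{sec:back}, so that $Q^\alpha\in Poly(|\alpha|nP)$.
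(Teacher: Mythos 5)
Your overall architecture matches the paper's: expand $f$ in powers of the basis polynomials $Q_1,\dots,Q_{d_n}$, truncate, estimate the geometric tail on $\bar D_{\rho}$, and use the product/lower-set bookkeeping to place the truncation in $Poly(mP)$. But the step you defer --- the expansion $f=\sum_\alpha c_\alpha Q^\alpha$ with \emph{constant} coefficients satisfying $|c_\alpha|\le C\,r^{-n|\alpha|}$ --- is not what the Cauchy--Weil/Oka--Weil machinery you invoke actually delivers, and it is exactly where the paper's proof does its work. The Weil representation on the polyhedron $D_R$ produces, after expanding the kernels, a series $\sum_\alpha c_\alpha(z)\,Q(z)^\alpha$ in which the $c_\alpha(z)$ are polynomials in $z$ coming from Hefer-type decompositions of $Q_j(\zeta)-Q_j(z)$, not constants. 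Reintroducing loose coordinate monomials is precisely what Remark \ref{nishino} warns against in the general $P$ setting: one can park these extra factors in a fixed $Poly(cnP)$ using (\ref{phyp}), but then the truncation of weight $\le L$ lies only in $Poly((L+c)nP)$, and matching the $P$-degree to $m$ costs a fixed deficit in the exponent, yielding an error of order $(\rho/R)^{m-cn}$ rather than $B(\rho/R)^m$ uniformly over all $\rho\le R$ (enough for the application, where $\rho\ge 1$, but not the lemma as stated, and in any case not the constant-coefficient expansion your bookkeeping relies on, since you place the partial sum in $Poly(LnP)$).

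The paper obtains the constant-coefficient expansion by a different device: it factors $f$ through the polydisc. With $S(z)=(Q_1(z),\dots,Q_{d_n}(z))$, the set $S(\bar D_{R_1})$ sits inside $\bar \Delta_{R_1}\subset \CC^{d_n}$, and Nishino's extension theorem (Theorem 8.2 of \cite{N}) provides $F$ holomorphic near $\bar \Delta_{R_1}$ with $F\circ S=f$; truncating the Taylor series of $F$ at degree $s$ and composing with $S$ gives $G_{sn}=\sum_{|I|\le s}F_I\,Q^I\in Poly(snP)$ with Cauchy-estimate control --- exactly the expansion you asserted. The hypothesis that legitimizes this, and which your proposal never uses, is $n\ge k$ from (\ref{phyp}): it puts the coordinate functions in $Poly(nP)$, hence makes $S$ one-to-one on $\bar D_R$, which is what Nishino's theorem requires; the paper deliberately avoids the Oka map $\tilde S(z)=(z,Q_1(z),\dots,Q_{d_n}(z))$ for the same degree-bookkeeping reason noted above. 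So to close the gap you should either import this factorization step (at which point you are reproducing the paper's argument) or carry out the Weil expansion honestly with Hefer-polynomial coefficients and accept the correspondingly weaker exponent.
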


\begin{proof} We show for $m=sn$, an integer multiple of $n$, that there exists $G_m\in Poly(mP)$ such that for all $\rho \leq R$, 
$$||f-G_m||_{\bar D_{\rho}}\leq B(\rho/R)^{m+n}$$
where $B$ is independent of $m$. To this end, let $S:\CC^d\to \CC^{d_n}$ via
$$S(z):=(Q_1(z),...,Q_{d_n}(z)).$$
Then $S(\CC^d)$ is a subvariety of $\CC^{d_n}$ and $S(D_R)$ is a subvariety of the polydisk
$$\Delta_R:=\{\zeta \in \CC^{d_n}: |\zeta_j|< R^n, \ j=1,...,d_n\}.$$
Choose $R_1>R$ so that $f$ is holomorphic on a neighborhood of $\bar D_{R_1}$. Let $F$ be holomorphic in a neighborhood of $\bar \Delta_{R_1}\subset \CC^{d_n}$ such that $F\circ S =f$ on $\bar D_{R_1}$. That such an $F$ exists follows from Theorem 8.2 in \cite{N}; see Remark \ref{nishino} below. Define
$$\beta_1:=R_1^n, \ \beta:=R^n, \ \hbox{and} \ \alpha:= \rho^n$$
Thus $\alpha \leq \beta <\beta_1$. Let
$$F(\zeta):=\sum_I F_I\zeta^I$$
be the Taylor series of $F$ about $0\in \CC^{d_n}$. By the Cauchy estimates on $\bar \Delta_{R_1}$, for each multiindex $I$ we have
$$|F_I|\leq ||F||_{\bar \Delta_{R_1}}\beta_1^{-|I|}.$$

Given a positive integer $s$, we let 
$$E_s(\zeta):= \sum_{|I|\leq s}F_I\zeta^I$$
be the Taylor polynomial of degree at most $s$ of $F$ at $0\in \CC^{d_n}$. Then for $m=sn$, let
$$G_m(z):=E_s\circ S(z)=\sum_{|I|\leq s} F_I Q_1(z)^{i_1}\cdots Q_{d_n}(z)^{i_{d_n}}$$
where $I=(i_1,...,i_{d_n})$. It follows that $G_m\in Poly(mP)$ because $Q_{j}\in Poly(nP)$ and $P$ is a lower set. Since $S(\bar D_{\rho})\subset \bar \Delta_{\rho}$, we have 
$$||f-G_m||_{\bar D_{\rho}}\leq ||F-E_s||_{\bar \Delta_{\rho}}\leq ||F||_{\bar \Delta_{R_1}}\sum_{|I|>s} (\frac{\alpha}{\beta_1})^{|I|}.$$
To obtain the desired estimate, note first that 
$$\sum_{|I|>s} (\frac{\alpha}{\beta_1})^{|I|}=\sum_{k=s+1}^{\infty} (\frac{\alpha}{\beta_1})^k{d_n+k-1\choose k}.$$
Choose $\beta_2$ with $\beta <\beta_2< \beta_1$ and $C>0$ so that
$$C(\beta_1/\beta_2)^k \geq {d_n+k-1\choose k}, \ k=1,2,3,...$$
Then
$$||f-G_m||_{\bar D_{\rho}}\leq ||F||_{\bar \Delta_{R_1}} \cdot C(\alpha /\beta_2)^{s+1} \cdot \frac{1}{1-\alpha/\beta_2}$$
$$ \leq B(\alpha/\beta)^{s+1}\leq B(\alpha/\beta)^{sn}$$
where $B=\frac{C||F||_{\bar \Delta_{R_1}}}{1-\beta/\beta_2}$.

\end{proof}

\begin{remark} \label{nishino} A version of this lemma was proved in \cite{Si} using the Oka extension theorem: instead of the mapping $S:\CC^d\to \CC^{d_n}$ via $S(z):=(Q_1(z),...,Q_{d_n}(z))$, one considers $\tilde S: \CC^d\to \CC^{d+d_n}$ via $\tilde S(z):=(z,Q_1(z),...,Q_{d_n}(z))$; the Oka result provides the existence of $\tilde F$ holomorphic on a neighborhood of $\tilde S(\bar D_R)$ with $\tilde F\circ \tilde S=f $ on a neighborhood of $\bar D_R$ (cf., section 3.3 of \cite{N}). We need to avoid using this Oka map $\tilde S$ as not all powers of the coordinates of $z\in \CC^d$ may be included in our $Poly(nP)$ spaces. Here, to apply the result in \cite{N}, we need $S$ to be {\it one-to-one} on $\bar D_R$. This follows since $n\geq k$ implies $\Sigma \subset nP$ so that the coordinate functions $e_j(z)=z_j, \ j=1,...,d$ belong to $Poly(nP)$ and $Q_1,...,Q_{d_n}$ form a basis for $Poly(nP)$.
\end{remark}

We want to construct polynomials $Q_1,...,Q_{d_n}$ so that for $n$ large the sets $D_R$ approximate the sublevel sets $\Omega_R$ of $V_{P,K}$. To this end, recall for $K\subset \CC^d$ compact, we defined
$$\Phi_n(z):= \sup \{|p_n(z)|: p_n\in Poly(nP),  \ \max_{\zeta \in K} ||p_n||_K\leq 1\}.$$
From Proposition \ref{turgay2}, we have for $K\subset \CC^d$ compact and nonpluripolar, 
$$V_{P,K} =\lim_{n\to \infty} \frac{1}{n} \log \Phi_n$$
pointwise on $\CC^d$; and if $\Phi:=e^{V_{P,K}}$ is continuous, the convergence is locally uniform on $\CC^d$. We assume continuity of $\Phi$ in Theorem \ref{BWthm}. We will use {\it Fekete points} and {\it Lagrange interpolating polynomials} to prove our results. To this end, let $\{e^{(n)}_j\}_{j=1,...,d_n}$ be basis monomials for $Poly(nP)$ where $d_n=$dim$(Poly(nP))$ and let $\{a_{nj}\}_{j=1,...,d_n}\subset K$ be {\it Fekete points of order $n$} for $K,Poly(nP)$; i.e., 
$$|VDM_n(a_{n1},...,a_{nd_n})|:=|\det [e^{(n)}_j(a_{nk})]_{j,k=1,...,d_n}|$$
is maximal among all $d_n-$tuples of points in $K$. Then the {\it fundamental Lagrange interpolating polynomials} 
$$l_j^{(n)}(z):=\frac{VDM_n(a_{n1},...,z,...,a_{nd_n})}{VDM_n(a_{n1},...,a_{nd_n})}, \ j=1,...d_n$$
($z$ in the $j-$th slot) form a basis for $Poly(nP)$ with the additional properties that
\begin{enumerate}
\item $||l_j^{(n)}||_K=1$; hence 
\item $\psi_n(z):=\max_{j=1,...,d_n}|l_j^{(n)}(z)|\leq \Phi_n(z)$ for $z\in \CC^d$; while
\item $\Phi_n(z)\leq d_n \psi_n(z)$ for $z\in \CC^d$.
\end{enumerate}

\noindent This final property follows from the Lagrange interpolation formula: for any $f$ defined on $K$, the Lagrange interpolating polynomial $L_n(f)$ for $f$ with nodes $a_{n1},...,a_{nd_n}$ is  
$$L_n(f)(z)=\sum_{j=1}^{d_n} f(a_{nj})l_j^{(n)}(z).$$
In particular, for $p_n\in Poly(nP)$, 
$$L_n(p_n)(z)=p_n(z)=\sum_{j=1}^{d_n} p_n(a_{nj})l_j^{(n)}(z).$$
Thus if, in addition, $||p_n||_K\leq 1$, 
$$|p_n(z)|\leq d_n \psi_n(z) \ \hbox{for}  \ z\in \CC^d.$$

For $R>0$ we have
$$\bar \Omega_R:=\{z\in \CC^d: \Phi(z) \leq R\}=\{z\in \CC^d: V_{P,K}(z)\leq \log R\}.$$
Defining 
\begin{equation}\label{dneqn} D_R:=\{ z\in \CC^d: \psi_n(z)<R^n\}=\{ z\in \CC^d: |l_j^{(n)}(z)|< R^n, \ j=1,...,d_n\},\end{equation} 
we have $\Omega_R\subset D_R$ since $\Phi=e^{V_{P,K}}\geq \Phi_n^{1/n}\geq \psi_n^{1/n}$. Note $D_R$ depends on $n$ while $\Omega_R$ does not. From (3), we get a reverse-type inclusion for $n$ large:

\begin{lemma} \label{bloom2.6} Given $0<R_1<R$, there exists $n_0$ such that for all $n\geq n_0$, 
$$D_{R_1} \subset \bar \Omega_R.$$

\end{lemma}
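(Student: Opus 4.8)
I would prove this following Bloom \cite{Bl}, from two ingredients: that $\psi_n^{1/n}\to\Phi:=e^{V_{P,K}}$ locally uniformly on $\CC^d$, and that the sets $D_{R_1}$ all lie in one fixed closed polydisk $\bar\Delta$ once $n$ is large, say for $n\ge n_1$. Granting both, one applies the uniform convergence on the compact set $\bar\Delta$ to produce $n_0\ge n_1$ with $\Phi(z)<\psi_n(z)^{1/n}+(R-R_1)$ for all $z\in\bar\Delta$ and $n\ge n_0$; then, for $n\ge n_0$ and $z\in D_{R_1}$, one has $z\in\bar\Delta$ and $\psi_n(z)^{1/n}<R_1$ (by \eqref{dneqn}), whence $\Phi(z)<R$, i.e.\ $z\in\bar\Omega_R$, which is the assertion.

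The local uniform convergence is a squeeze. Property (3) above gives $\Phi_n\le d_n\psi_n$, while $\psi_n\le\Phi_n$ is immediate, so $(\Phi_n/d_n)^{1/n}\le\psi_n^{1/n}\le\Phi_n^{1/n}$ on $\CC^d$; since $d_n^{1/n}\to1$ by \eqref{dn}, and $\Phi_n^{1/n}\to\Phi$ locally uniformly by Proposition \ref{turgay2} (the $PL$-regularity of $K$ being precisely what makes $\Phi$ continuous on $\CC^d$), the claim follows. I should note in passing that $\psi_n>0$ everywhere, because $l_1^{(n)},\dots,l_{d_n}^{(n)}$ is a basis of $Poly(nP)$ and $1\in Poly(nP)$; so $\psi_n^{1/n}$ is well defined.

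The $n$-uniform polydisk bound on $D_{R_1}$ is the only real difficulty, the point being that the polydisk must not depend on $n$. The naive bound, using merely that the coordinate functions $z_j$ lie in $Poly(nP)$ (valid for $n\ge k$), is $|z_j|\le d_n||z_j||_K\,\psi_n(z)$, confining $D_{R_1}$ to a polydisk whose radius grows with $n$ — too weak. Instead I would use a monomial of degree growing linearly in $n$: from \eqref{phyp}, $0\in\Sigma\subset kP$ forces $0\in P$, so by convexity of $P$ one has $mP\subset nP$ for every $m\le n$, and in particular $z_j^{\,q_n}\in Poly(nP)$ with $q_n:=\lfloor n/k\rfloor$. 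Hence $\Phi_n(z)\ge(|z_j|/||z_j||_K)^{q_n}$, so $\psi_n(z)^{1/n}\ge d_n^{-1/n}(|z_j|/||z_j||_K)^{q_n/n}$; since $q_n/n\to1/k$ and $d_n^{1/n}\to1$, there exist $M>0$ (depending only on $K$, $k$, $R_1$) and $n_1$ so that $\psi_n(z)^{1/n}\ge R_1$ as soon as $\max_j|z_j|\ge M$ and $n\ge n_1$. Therefore $D_{R_1}\subset\bar\Delta:=\{z\in\CC^d:|z_j|\le M,\ j=1,\dots,d\}$ for all $n\ge n_1$, which is the second ingredient.
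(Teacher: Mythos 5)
Your argument is correct, and its core is the same as the paper's: the squeeze $d_n^{-1/n}\Phi_n^{1/n}\le \psi_n^{1/n}\le \Phi_n^{1/n}$ together with $d_n^{1/n}\to 1$ from \eqref{dn} and the locally uniform convergence $\Phi_n^{1/n}\to\Phi$ from Proposition \ref{turgay2} (continuity of $\Phi$ being exactly the $PL$-regularity hypothesis). Where you genuinely add something is in justifying the reduction to a compact set. The paper's proof fixes the compact set $\bar\Omega_{2R}$, obtains $\psi_n(z)^{1/n}>t\Phi(z)$ there for all $n\ge n_0$ with $t=R_1/R$, and stops; as written this yields only $D_{R_1}\cap\bar\Omega_{2R}\subset\bar\Omega_R$, and the point that $D_{R_1}$ (which depends on $n$) cannot contain points far outside any fixed compact set is left implicit --- note that the naive estimate $|z_j|\le d_n\|z_j\|_K\,\psi_n(z)$, valid for $n\ge k$, only confines $D_{R_1}$ to a polydisk whose radius grows with $n$, as you observe. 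Your monomial trick supplies the missing uniformity: since $e_j\in\Sigma\subset kP$ by \eqref{phyp} and $0\in P$, convexity gives $mP\subset nP$ for $m\le n$, so $z_j^{\lfloor n/k\rfloor}\in Poly(nP)$ and hence $\psi_n(z)^{1/n}\ge d_n^{-1/n}\bigl(|z_j|/\|z_j\|_K\bigr)^{\lfloor n/k\rfloor/n}$, an estimate uniform in $n$ because $\lfloor n/k\rfloor/n\to 1/k$; this traps $D_{R_1}$ in a single polydisk $\bar\Delta$ for all large $n$, after which uniform convergence of $\psi_n^{1/n}$ to $\Phi$ on $\bar\Delta$ finishes the proof exactly as in the paper (and exactly as in your final step, using $\psi_n<R_1^n$ on $D_{R_1}$ from \eqref{dneqn}). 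So your route is the paper's argument made complete rather than a different one; the extra step is worth keeping, since the subsequent Proposition really needs the full inclusion $D_{R_1}\subset\bar\Omega_{R'}$ (so that $f$ is holomorphic on a neighborhood of $\bar D_{R_1}$ before Lemma \ref{bloom2.3} is applied). Two small points to record if you write it up: $\|z_j\|_K>0$ because $K$ is nonpluripolar and so not contained in the hyperplane $\{z_j=0\}$; and your positivity remark about $\psi_n$ is harmless but not needed, since only the inequality $\psi_n^{1/n}\ge d_n^{-1/n}\Phi_n^{1/n}$ is used.
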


\begin{proof} We have 
$$\psi_n^{1/n} \geq d_n^{-1/n}\Phi_n^{1/n}.$$
Thus if $t<1$,
$$\psi_n(z)^{1/n}> t\Phi(z)$$
for $n\geq n_0$ where $n_0$ depends on $z$. Since we assume $\Phi$ is continuous, we can choose $n_0$ independent of $z$ for $z$ in a compact set; e.g., for $z\in \bar \Omega_{2R}$. Now take $t=R_1/R$.

\end{proof}

The following result proves the ``if'' direction of Theorem \ref{BWthm}.

\begin{proposition} Let $P$ be a lower set and let $K$ be compact and $PL-$regular. 
Let $R > 1$, and let $f$ be holomorphic on $\Omega_R$.
Then for any $R'<R$ the Lagrange interpolating polynomials $L_n(f)$ for $f$ associated with a Fekete array for $K,P$ satisfy
$$||f-L_n(f)||_K\leq B/(R')^n$$
where $B$ is a constant independent of $n$.
\end{proposition}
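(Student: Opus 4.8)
The plan is to combine Lemma~\ref{bloom2.3} and Lemma~\ref{bloom2.6} to convert ``holomorphic on $\Omega_R$'' into a polynomial approximation estimate, and then to upgrade that estimate to one for the specific Fekete interpolants $L_n(f)$ via the standard Lebesgue-constant argument. First I would fix $R'<R$ and pick intermediate radii $R' < R_1 < R_2 < R$. Applying Lemma~\ref{bloom2.6} with the pair $R_1 < R_2$, there is $n_0$ so that $D_{R_1}\subset \bar\Omega_{R_2}$ for all $n\ge n_0$, where $D_{R_1}$ is the sublevel set of $\psi_n$ from~(\ref{dneqn}). Since $f$ is holomorphic on $\Omega_R\supset \bar\Omega_{R_2}$, it is holomorphic on a neighborhood of $\bar D_{R_1}$ (for $n\ge n_0$), so Lemma~\ref{bloom2.3}, applied with the basis $\{l_j^{(n)}\}$ of $Poly(nP)$ and with ``$R$'' there equal to $R_1$, produces for each $m$ a polynomial $G_m\in Poly(mP)$ with $\|f-G_m\|_{\bar D_\rho}\le B(\rho/R_1)^m$ for all $\rho\le R_1$; in particular $\|f-G_m\|_K \le B(\,\rho_0/R_1)^m$ for a fixed $\rho_0<R_1$ (recall $K\subset \Omega_1\subset D_1\subset \bar D_{\rho_0}$ once $\rho_0$ is chosen in $(1,R_1)$, which is fine since $R_1>1$). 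Hence $D_m(f,P,K)\le \|f-G_m\|_K$ decays geometrically with ratio $\rho_0/R_1$; more to the point we will want the estimate on the larger set $\bar D_{R_1}$ itself, so I would keep the full conclusion $\|f-G_m\|_{\bar D_{R_1}}\le B$, and on $K$ the sharper bound.

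Next I would control $\|f-L_n(f)\|_K$ directly. The key identity is that $L_n$ reproduces polynomials in $Poly(nP)$: for any $g\in Poly(nP)$ we have $L_n(g)=g$, hence for any $g\in Poly(nP)$,
$$f - L_n(f) = (f-g) - L_n(f-g),$$
and therefore
$$\|f-L_n(f)\|_K \le \|f-g\|_K + \|L_n(f-g)\|_K \le (1+\Lambda_n)\,\|f-g\|_K,$$
where $\Lambda_n:=\sup_{z\in K}\sum_{j=1}^{d_n}|l_j^{(n)}(z)| = \sup_{z\in K}\psi_n\cdot(\text{at most }d_n)\le d_n$ by property~(1) of the fundamental Lagrange polynomials ($\|l_j^{(n)}\|_K=1$). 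Choosing $g=G_n\in Poly(nP)$ from the previous paragraph gives
$$\|f-L_n(f)\|_K \le (1+d_n)\cdot B\,(\rho_0/R_1)^n.$$
Since $d_n=O(n^d)$ by~(\ref{dn}), the polynomial factor $1+d_n$ is absorbed: for $n$ large, $(1+d_n)(\rho_0/R_1)^n \le (R_1/R')^{-n}$ because $\rho_0/R_1<1$ and we only need to beat the fixed ratio $R'/R_1<1$. Wait --- here $\rho_0$ can be taken as close to $1$ as we like while $R_1<R$ is close to $R$, so $\rho_0/R_1$ is comfortably below $R'/R$; in any case $(1+d_n)(\rho_0/R_1)^n = O((R')^{-n})$ follows at once since a polynomial times a smaller geometric ratio is eventually dominated by any larger geometric ratio. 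Renaming the constant, $\|f-L_n(f)\|_K\le B/(R')^n$, which is the claim.

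The main obstacle --- and the point that needs the most care --- is the interface between the two lemmas: Lemma~\ref{bloom2.3} is phrased for $D_R$ defined from \emph{an arbitrary basis} $Q_1,\dots,Q_{d_n}$ of $Poly(nP)$, and we must legitimately invoke it with the basis $Q_j=l_j^{(n)}$ so that the set $D_{R_1}$ appearing there is exactly the set $D_{R_1}$ of~(\ref{dneqn}) used in Lemma~\ref{bloom2.6}. This is consistent, but one must check the standing hypotheses: we need $n\ge k$ (so that $\Sigma\subset nP$, giving the coordinate functions in $Poly(nP)$ and making the map $S$ one-to-one, as in Remark~\ref{nishino}), and we need $P$ a lower set (used both in Lemma~\ref{bloom2.3} to ensure $G_m\in Poly(mP)$ and implicitly throughout). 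A second, smaller subtlety is that Lemma~\ref{bloom2.3} only yields $G_m$ for $m$ a multiple of $n$ in its proof, but its \emph{statement} gives every $m$; I would simply quote the statement. Finally one should note that $f$ holomorphic on the open set $\Omega_R$ is holomorphic on a neighborhood of the \emph{compact} set $\bar D_{R_1}$ only because $\bar D_{R_1}\subset \bar\Omega_{R_2}\subset \Omega_R$ with $R_2<R$ strictly --- the strict inequality $R_1<R_2<R$ is what buys the open neighborhood, so the choice of the chain $R'<R_1<R_2<R$ at the outset is essential and not merely cosmetic.
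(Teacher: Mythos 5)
Your proposal is correct and takes essentially the same route as the paper: Lemma \ref{bloom2.6} to trap $D_{R_1}$ inside a closed sublevel set of $V_{P,K}$ compactly contained in $\Omega_R$, Lemma \ref{bloom2.3} applied with the Fekete--Lagrange basis $\{l_j^{(n)}\}$ to produce $G_n\in Poly(nP)$, and then the reproduction property $L_n(G_n)=G_n$ together with $\|l_j^{(n)}\|_K=1$ and $d_n=O(n^d)$ to absorb the Lebesgue-constant factor. In fact your ordering $R'<R_1<R_2<R$ (the paper writes $1<R_1<R'$) is what makes the final absorption of $d_n$ literally deliver the $(R')^{-n}$ rate; just state explicitly that $\rho_0$ is chosen in $(1,R_1/R')$ so that $\rho_0/R_1<1/R'$ (your aside comparing $\rho_0/R_1$ with $R'/R$ and the intermediate bound by $(R'/R_1)^n$ are the one garbled spot, and note $K\subset\bar\Omega_1\subset\bar D_1$, not $\Omega_1$, which is empty).
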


\begin{proof} Choose $R_1$ with $1<R_1<R'$. By Lemma \ref{bloom2.6} for all sufficiently large $n$ we have 
$$D_{R_1}\subset \bar \Omega_{R'}.$$
Here $D_{R_1}$ is defined in (\ref{dneqn}). Fix such an $n$. By Lemma \ref{bloom2.3} there exists $G_n\in Poly(nP)$ with
\begin{equation}\label{use} ||f-G_n||_{D_{\rho}}\leq B(\rho/R_1)^n \end{equation}
for all $\rho \leq R_1$. Let $g_n:= f-G_n$. Since $G_n\in Poly(nP)$, 
$$f-L_n(f)=g_n-L_n(g_n).$$
But
$$g_n(z)-L_n(g_n)(z)=g_n(z)-\sum_{j=1}^{d_n} g_n(a_{nj})l_j^{(n)}(z).$$
Recall that $|l_j^{(n)}(z)|\leq 1$ for all $z\in K$ and $j=1,...,d_n$. Thus, since $a_{nj}\in K\subset D_1$ for all $n$, from (\ref{use}), 
$$ |g_n(a_{nj})|\leq B(1/R_1)^n.$$
Thus for all $n$ sufficiently large, 
$$|\sum_{j=1}^{d_n} g_n(a_{nj})l_j^{(n)}(z)|\leq BR_1^{-n}d_n, \ z\in K.$$
Now since $K \subset \bar \Omega_1 \subset D_1$ (for all $n$), again by (\ref{use}) we have
$$|g_n(z)|\leq BR_1^{-n}, \ z\in K.$$
Thus for $n$ large we obtain
$$||f-L_n(f)||_K\leq BR_1^{-n} + BR_1^{-n}d_n$$
so that, indeed, 
$$||f-L_n(f)||_K\leq \tilde B/(R')^n.$$
\end{proof}

\section{Applications.}\label{sec:app} In this section we make the connection between Theorem \ref{BWthm} and Trefethen's work \cite{T}, where he introduces a new notion of degree for a polynomial 
$\displaystyle{p(x)=\sum_{\alpha\in \ZZ_+^d} a_\alpha x^\alpha}$, the {\it Euclidean} degree, which we may write as
\[{\rm deg}_E(p):=\max_{a_\alpha \neq 0}(|\alpha_1|^2+\cdots+|\alpha_d|^2)^{1/2}.\]

For $P\subset \RR_+^d$ a convex body, we may define an associated ``norm'' for $x\in \RR_+^d$ via the Minkowski functional
\[\|x\|_P:=\inf_{\lambda>0}\{x\in \lambda P\}.\] 
We remark that this defines a true norm on all of $\RR^d$ if $P$ is the positive ``octant'' of a centrally symmetric convex body $B,$ i.e., $P=B\cap (\RR_+)^d$. We may thus define a general degree associated to the convex body $P$ as
\[{\rm deg}_P(p):= \max_{a_\alpha \neq 0} \|\alpha\|_P.\]
Then
$$Poly(nP)=\{p\,:\,{\rm deg_P}(p)\le n\}.$$

For $q\ge1$, if we let 
\begin{equation}\label{Pq}
P_q:=\{(x_1,...,x_d): x_1,..., x_d\geq 0, \ x_1^q+\cdots +x_d^q\leq 1\}
\end{equation}
be the $(\RR_+)^d$ portion of an $\ell^q$ ball then we have, in the notation of \cite{T},
\begin{align*}
d_T(p)&={\rm deg}_{P_1}(p)\quad  \hbox{ ({\it total} degree)};\\
d_E(p)&={\rm deg}_{P_2}(p)\quad  \hbox{({\it Euclidean} degree)}; \\
d_{\rm max}(p)&={\rm deg}_{P_\infty}(p)\quad  \hbox{({\it max} degree)}.
\end{align*}

Further, if we let $1/q'+1/q=1$, then if $E_1,...,E_d\subset \CC$, from (\ref{product}),
\begin{align*}
V_{P_q,E_1\times \cdots \times E_d}(z_1,...,z_d)&=\|[V_{E_1}(z_1),V_{E_2}(z_2),
\cdots V_{E_d}(z_d)]\|_{\ell_{q'}}\\
&=
[V_{E_1}(z_1)^{q'}+\cdots +V_{E_d}(z_d)^{q'}]^{1/q'}.
\end{align*}

For the  particular product set, $K:=[-1,1]^d$ where $E_j=[-1,1]$ for $j=1,...,d,$ that Trefethen considers, $V_{E_j}(z_j)=\log|z_j+\sqrt{z_j^2-1}|$ and hence we have
\begin{equation}\label{VPq}
 V_{P_q,[-1,1]^d}(z_1,\cdots,z_d)=\left\{\sum_{j=1}^d \left(\log\left|z_j+\sqrt{z_j^2-1}\right|\right)^{q'}\right\}^{1/q'}.
 \end{equation}

Further, for $f$ continuous and complex-valued on $K$ we  define (as before) the approximation numbers,
\begin{align}\label{ApproxNums}
D_n(f,P,K)&:= \inf \{ ||f-p_n||_{K}:p_n \in Poly(nP)\} \nonumber \\
&=\inf \{ ||f-p_n||_{K}:{\rm deg}_{P}(p_n)\leq n\}.
\end{align} 

Essentially, \cite{T} compares approximation numbers $D_n(f,P_q,[-1,1]^d)$ for $q=1,$ $q=2$ and $q=\infty$ in different dimensions $d$ and notes the different rates of decay for holomorphic functions. Our Theorem \ref{BWthm} explains this behavior, precisely and in greater generality.

\begin{example}\label{example1}
Consider the multivariate Runge-type  function 
\[f(z):=\frac{1}{r^2+z^2},\quad r>0\]
where $z\in\CC^d,$ $z^2:=\sum_{j=1}^d z_j^2$
and $K=[-1,1]^d$ (cf. (2.1) of \cite{T}). This function is holomorphic except on its singular set 
\begin{align*}
S=S(f):=\{z\in\CC^d\,:\,z^2=-r^2\},
\end{align*}
an algebraic variety having no real points.

By Theorem \ref{BWthm}, the approximation numbers $D_n(f,P,K)$ decay like $R^{-n}$ iff $f$ is holomorphic in the set
\[\Omega_R:=\{z\in\CC^d\,:\, V_{P,K}(z)<\log R \}.\]
In other words,  $D_n(f,P,K)$ decays like $R^{-n}$ where 
\[R=R(P,K):=\sup\{R'>0\,:\, \Omega_{R'}\cap S=\emptyset \}.\]
It is easy to see that
\[ \log(R(P,K))= \min_{z\in S}V_{P,K}(z).\]

We note at this point the following elementary fact.
\begin{lemma}\label{Pscaled} Suppose that $c>0.$ Then
\[R(cP,K)=(R(P,K))^c.\]
\end{lemma}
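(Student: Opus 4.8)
The plan is to unwind the definitions and use the positive homogeneity of the logarithmic indicator function. The key observation is that $H_{cP}(z) = c\,H_P(z)$ for any $c > 0$: indeed, $\phi_{cP}(x) = \sup_{y \in cP} x\cdot y = \sup_{w \in P} x\cdot(cw) = c\,\phi_P(x)$, and $H_P$ is just $\phi_P$ evaluated at $(\log|z_1|,\dots,\log|z_d|)$. From this it follows immediately that $u \in L_{cP}(\CC^d)$ if and only if $u/c \in L_P(\CC^d)$, since $u(z) - H_{cP}(z) = c\bigl(\tfrac{u(z)}{c} - H_P(z)\bigr)$ is $O(1)$ precisely when $\tfrac{u(z)}{c} - H_P(z)$ is. Taking the supremum in the definition of the $P$-extremal function, this gives $V_{cP,K}(z) = c\,V_{P,K}(z)$ for any compact $K$ (and the same for the upper semicontinuous regularizations), since a function $u \le 0$ on $K$ lies in $L_{cP}$ iff $u/c \le 0$ on $K$ lies in $L_P$.

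With the scaling relation $V_{cP,K} = c\,V_{P,K}$ in hand, the rest is direct. First I would record that the sublevel set of $V_{cP,K}$ at level $\log R'$ is $\{z : c\,V_{P,K}(z) < \log R'\} = \{z : V_{P,K}(z) < \log R'^{1/c}\}$, i.e. the set called $\Omega_{R'}$ for the body $cP$ equals the set called $\Omega_{R'^{1/c}}$ for the body $P$. Then, using the characterization from just before the lemma,
\[
\log R(cP,K) = \min_{z \in S} V_{cP,K}(z) = \min_{z \in S} c\,V_{P,K}(z) = c \min_{z \in S} V_{P,K}(z) = c\,\log R(P,K),
\]
which is exactly $R(cP,K) = R(P,K)^c$. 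Alternatively, and perhaps more cleanly for the write-up, one can argue purely at the level of the defining supremum: $\Omega_{R'}$ for $cP$ is disjoint from $S$ iff $\Omega_{R'^{1/c}}$ for $P$ is disjoint from $S$, so $R(cP,K) = \sup\{R' : \Omega_{R'^{1/c}}(P,K) \cap S = \emptyset\} = \bigl(\sup\{\rho : \Omega_\rho(P,K) \cap S = \emptyset\}\bigr)^c = R(P,K)^c$, using the substitution $\rho = R'^{1/c}$.

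I do not anticipate a genuine obstacle here; this is an "elementary fact" as the authors say, and the only point requiring a line of justification is the homogeneity $V_{cP,K} = c\,V_{P,K}$, which itself reduces to the homogeneity of the support function $\phi_{cP} = c\,\phi_P$. One small care point is to make sure the argument is phrased so that it does not secretly require $cP$ to still satisfy hypothesis (\ref{phyp}) or to be one of the standard examples — but $cP$ is automatically a convex body in $(\RR^+)^d$ when $P$ is and $c > 0$, and if $\Sigma \subset kP$ then $\Sigma \subset (k c^{-1} \lceil\,\rceil)\,(cP)$ after rounding up, so (\ref{phyp}) is preserved; in any case the homogeneity identity for $V$ holds regardless, since its proof only uses the definition via the Lelong-type class $L_{cP}$.
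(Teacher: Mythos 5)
Your proof is correct: the homogeneity $\phi_{cP}=c\,\phi_P$ gives $H_{cP}=c\,H_P$, hence $u\in L_{cP}$ iff $u/c\in L_P$, so $V_{cP,K}=c\,V_{P,K}$, and then $\log R(cP,K)=\min_{z\in S}V_{cP,K}(z)=c\,\min_{z\in S}V_{P,K}(z)=c\,\log R(P,K)$, which is the claim. The paper states this lemma without proof as an elementary fact, and your homogeneity argument is exactly the intended one (your aside about preserving hypothesis (\ref{phyp}) is indeed immaterial, since the scaling identity for $V_{P,K}$ only uses the definition of the class $L_{cP}$).
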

\bigskip
We now compute the values of $R(P_q,K)$ for $q\ge1.$ Specifically
\begin{lemma}\label{q=1} For $q=1$ (corresponding to the total degree case) 
\[R(P_1,K)=\frac{r+\sqrt{r^2+d}}{\sqrt{d}}.\]
\end{lemma}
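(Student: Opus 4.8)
The plan is to reduce the computation, via the identity $\log R(P_1,K)=\min_{z\in S}V_{P_1,K}(z)$ noted just before the lemma, to a clean min--max over the singular variety $S=\{z\in\CC^d:z_1^2+\cdots+z_d^2=-r^2\}$. First I would observe that for $q=1$ we have $q'=\infty$, so the $\ell_{q'}$ expression in (\ref{VPq}) degenerates to a maximum and
$$V_{P_1,[-1,1]^d}(z)=\max_{1\le j\le d} g(z_j),\qquad g(w):=\log\bigl|w+\sqrt{w^2-1}\bigr|,$$
where $g$ is the Green function of $[-1,1]$ with pole at infinity. Thus the task is to evaluate $\min_{z\in S}\max_j g(z_j)$.

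For the upper bound I would test the symmetric competitor $z^\ast=(ir/\sqrt d,\dots,ir/\sqrt d)$, which lies in $S$ because $\sum_j (z^\ast_j)^2=d\cdot(-r^2/d)=-r^2$. Using the Joukowski map $\zeta\mapsto\tfrac12(\zeta+\zeta^{-1})$, under which $g$ pulls back to $\log|\zeta|$, one gets $g(it)=\log\bigl(t+\sqrt{1+t^2}\bigr)=\sinh^{-1}t$ for $t\ge0$. Hence $V_{P_1,K}(z^\ast)=\sinh^{-1}(r/\sqrt d)=\log\frac{r+\sqrt{r^2+d}}{\sqrt d}$, giving $\log R(P_1,K)\le\log\frac{r+\sqrt{r^2+d}}{\sqrt d}$.

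For the matching lower bound, fix $z\in S$ and set $M:=\max_j g(z_j)\ge0$. Each $z_j$ then lies in the sublevel set $\{w:g(w)\le M\}$, which (again from the Joukowski description, with $\rho=e^M$) is exactly the closed filled ellipse with foci $\pm1$ and semi-axes $\cosh M=(\rho+\rho^{-1})/2$ along the real axis and $\sinh M=(\rho-\rho^{-1})/2$ along the imaginary axis. Since $\Re(w^2)=x^2-y^2$ is harmonic, its minimum over this compact ellipse is attained on the boundary, and parametrizing $w=\cosh M\cos\theta+i\sinh M\sin\theta$ yields $\Re(w^2)=\cosh^2 M-(\cosh^2 M+\sinh^2 M)\sin^2\theta\ge-\sinh^2 M$. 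Therefore $\Re(z_j^2)\ge-\sinh^2 M$ for every $j$, and since $\sum_j z_j^2=-r^2$ is real,
$$-r^2=\sum_{j=1}^d\Re(z_j^2)\ge -d\sinh^2 M,$$
so $\sinh M\ge r/\sqrt d$, i.e. $M\ge\sinh^{-1}(r/\sqrt d)=\log\frac{r+\sqrt{r^2+d}}{\sqrt d}$. Taking the infimum over $z\in S$ gives the reverse inequality, and combining the two bounds proves the lemma.

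The only step I expect to require genuine care is identifying the sublevel sets $\{g\le M\}$ of the Green function of $[-1,1]$ as the confocal filled ellipses with semi-axes $\cosh M$ and $\sinh M$, and then legitimately passing the estimate for $\Re(w^2)$ to the boundary by harmonicity; the remaining ingredients --- the degeneration of (\ref{VPq}) at $q=1$, the value $g(it)=\sinh^{-1}t$, and the elementary identity $\sinh^{-1}(r/\sqrt d)=\log\frac{r+\sqrt{r^2+d}}{\sqrt d}$ --- are routine.
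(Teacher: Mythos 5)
Your proof is correct and follows essentially the paper's argument: both identify $V_{P_1,K}$ as the maximum of the univariate Green functions, use the confocal-ellipse description of the sublevel sets to bound $\Re(z_j^2)$ from below by minus the squared semi-minor axis, and combine this with $\sum_j\Re(z_j^2)=-r^2$ to force the minimum level $\sinh^{-1}(r/\sqrt d)$, attained at $(\pm ir/\sqrt d,\dots,\pm ir/\sqrt d)$. Your harmonicity-plus-parametrization step is a slightly more elaborate route to the same inequality the paper gets directly from $|y|\le b$, but the substance is identical.
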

\begin{proof} In this case 
\[V_{P_1,K}(z)=\max_{1\le j\le d}\left\{\log\left|z_j+\sqrt{z_j^2-1}\right|\right\}.\]
Now, as is well known, the level sets of the univariate extremal function
$\log\left|\zeta+\sqrt{\zeta^2-1}\right|$ are confocal ellipses. Specifically, for $\rho>1,$
\[E_\rho:=\left\{\zeta\in\CC\,:\,\left|\zeta+\sqrt{\zeta^2-1}\right|=\rho\right\}\]
is the ellipse $(x/a)^2+(y/b)^2=1$ with $\zeta=x+iy$ and
$a=(\rho+1/\rho)/2,$ $b=(\rho-1/\rho)/2.$ The degenerate case with $\rho=1$
corresponds to $E_1=[-1,1].$ 

The interior and exterior  of the ellipse $E_\rho$ are given by the sublevel and suplevel sets
\begin{align*}
E_{<\rho}&:=\left\{\zeta\in\CC\,:\,\left|\zeta+\sqrt{\zeta^2-1}\right|<\rho
\right\}, \\
E_{>\rho}&:=\left\{\zeta\in\CC\,:\,\left|\zeta+\sqrt{\zeta^2-1}\right|>\rho
\right\}.
\end{align*}

For convenience, set $r'=r/\sqrt{d}$ so that the singular set
$$S(f)=\{z\in\CC^d\,:\,z^2=-d(r')^2\}.$$

For the particular case of $\rho=\rho^*:=
r'+\sqrt{(r')^2+1},$ it is easy to check that $a=\sqrt{1+(r')^2}$ and $b=r'.$ Hence
if $\zeta=x+iy\in E_{\le \rho^*},$  we have $|y|\le r'$ and $|y|=r'$ iff
$\zeta=\pm ir'.$ It follows that for $\zeta=x+iy\in E_{\le\rho^*},$ we have
\[\Re(\zeta^2)=x^2-y^2\ge -(r')^2\]
and $\Re(\zeta^2)= -(r')^2$ iff $\zeta=\pm ir'.$ Consequently, for a point $z$ on the singular set $S,$ i.e., with $\sum_{j=1}^d z_j^2=-d(r')^2$ (and hence 
$\sum_{j=1}^d\Re(z_j^2)=-d(r')^2$),  $z_j\in E_{<\rho^*}$ implies that
for some $k\neq j,$ $z_k\in  E_{>\rho^*}.$ Thus the minimum of $\exp(V_{P_1,K}(z))$ is 
$\rho^*=r'+\sqrt{1+(r')^2}=(r+\sqrt{r^2+d})/\sqrt{d},$ as claimed, and is attained for $z\in\{\pm ir/\sqrt{d}\}^d.$ 
\end{proof}
\bigskip

\begin{lemma} \label{CharCritPts} (Characterization of Lagrange Critical Points) Suppose that $q'=:p<\infty$ (i.e., $\infty\ge q>1$). Then, in the minimization problem
\[ \min_{z\in S} \exp\bigl(V_{P_q,K}(z)\bigr), \]
the critical points are characterized by the condition
\[m(z_j):=\bigl(\log(|z_j+\sqrt{z_j^2-1}|)\bigr)^{p-1}\frac{1}{z_j\sqrt{z_j^2-1}}
=m(z_{j'})\]
for every pair $1\le j,j'\le d.$
\end{lemma}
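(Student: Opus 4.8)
The plan is to set up the constrained optimization problem using Lagrange multipliers. We wish to minimize
\[ g(z) := V_{P_q,K}(z) = \left( \sum_{j=1}^d \left( \log\left| z_j + \sqrt{z_j^2-1} \right| \right)^{p} \right)^{1/p} \]
(writing $p = q'$) subject to the constraint $h(z) := \sum_{j=1}^d z_j^2 = -r^2$, and minimizing $\exp(V_{P_q,K}(z))$ is equivalent to minimizing $V_{P_q,K}(z)$, which in turn (since the $p$-th power is increasing on $[0,\infty)$) is equivalent to minimizing $g(z)^p = \sum_j u(z_j)^p$ where $u(\zeta) := \log|\zeta + \sqrt{\zeta^2-1}|$. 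First I would record the single-variable derivative $u'(\zeta) = 1/\sqrt{\zeta^2-1}$ (the standard fact that $\frac{d}{d\zeta}\log(\zeta+\sqrt{\zeta^2-1}) = 1/\sqrt{\zeta^2-1}$, valid off $[-1,1]$), so that $\frac{d}{dz_j}\bigl(u(z_j)^p\bigr) = p\, u(z_j)^{p-1}/\sqrt{z_j^2-1}$, while $\frac{d}{dz_j} h(z) = 2 z_j$.

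The Lagrange condition $\nabla(g^p) = \lambda \nabla h$ then reads, for each $j$,
\[ p\, \frac{\bigl(\log|z_j+\sqrt{z_j^2-1}|\bigr)^{p-1}}{\sqrt{z_j^2-1}} = 2\lambda\, z_j, \]
i.e.
\[ \bigl(\log|z_j+\sqrt{z_j^2-1}|\bigr)^{p-1}\,\frac{1}{z_j\sqrt{z_j^2-1}} = \frac{2\lambda}{p} \]
for every $j$. Since the right-hand side is independent of $j$, equating the expressions for indices $j$ and $j'$ gives precisely $m(z_j) = m(z_{j'})$, which is the claimed characterization. I would present this as the main computation, being slightly careful that $g$ itself is not differentiable where some coordinate lies in $[-1,1]$ (where $u$ vanishes and is not smooth), but noting that on the singular set $S$ one cannot have all coordinates real, and at a genuine interior critical point of the minimization the relevant coordinates stay off $[-1,1]$; alternatively one works directly with the smooth function $\sum_j u(z_j)^p$ on the open set where each $z_j \notin [-1,1]$, which suffices since the minimum of $g$ over $S$ is positive.

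The step I expect to require the most care is the justification that the Lagrange multiplier method applies — i.e. that a minimizer exists in the region where the objective is smooth and that the constraint gradient $\nabla h = 2(z_1,\ldots,z_d)$ is nonzero there. Existence follows because $V_{P_q,K}$ is continuous, proper (it grows at infinity like $H_{P_q}$, hence $\to\infty$), and bounded below by a positive constant on $S$ (since $S$ is disjoint from $\bar\Omega_1 \supset K$); nonvanishing of $\nabla h$ holds because $z = 0 \notin S$. A minor additional point is to observe that the complex-analytic functions involved are holomorphic off $[-1,1]$, so the real gradient conditions can be written in the compact complex form stated. Once these regularity points are dispatched, the characterization is just the first-order condition written out, and the lemma follows.
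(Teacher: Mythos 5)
Your outline is the same as the paper's (minimize $F=\sum_{j}f(z_j)^p$, $f(\zeta)=\log|\zeta+\sqrt{\zeta^2-1}|$, over the variety $z^2=-r^2$ by Lagrange multipliers), but the step you call ``a minor additional point'' is precisely where the paper's proof does all of its work, and as written your derivation has a gap there. First, $f$ is real-valued and not holomorphic, so ``$\frac{d}{dz_j}\bigl(f(z_j)^p\bigr)$'' is not a complex derivative; interpreted as a Wirtinger derivative it equals $\frac{p}{2}\,f(z_j)^{p-1}/\sqrt{z_j^2-1}$, i.e.\ half of what you wrote, because $f=\tfrac12\bigl(\log h(\zeta)+\log h(\overline{\zeta})\bigr)$ with $h(\zeta)=\zeta+\sqrt{\zeta^2-1}$ (harmless for the final identity, but a symptom of using the formal calculus without justification). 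More seriously, the constraint $\sum_j z_j^2=-r^2$ is one \emph{complex} equation, hence two real equations, so the multiplier rule produces two real multipliers (equivalently one complex multiplier $\mu$), not the single ``$\lambda$'' in $\nabla(g^p)=\lambda\nabla h$; with a real scalar $\lambda$ the displayed equation is not the correct first-order condition for this problem. The statement you actually need --- that for a real objective and a holomorphic constraint the first-order conditions collapse to $f(z_j)^{p-1}/(z_j\sqrt{z_j^2-1})=\mu$ for a single complex constant $\mu$ independent of $j$ --- is exactly what the paper proves: it splits the constraint into its real and imaginary parts $g_1,g_2$, computes $\nabla f$ in the real variables $(x_j,y_j)$, inverts the resulting $2\times2$ matrix built from $x_j,y_j$, and identifies the outcome with $1/(\overline{z_j}\sqrt{\overline{z_j}^2-1})$ before conjugating. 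Your one-sentence justification (``the complex-analytic functions involved are holomorphic off $[-1,1]$'') does not supply this, precisely because the objective is only the real part of a holomorphic function. The conclusion $m(z_j)=m(z_{j'})$ does survive, since whatever the (complex) constant is, it is independent of $j$; but to close the gap you must either carry out the real-variable computation as the paper does, or state and verify the Wirtinger form of the multiplier rule with complex $\mu$ that you are implicitly invoking.

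A secondary point: your remark that a minimizer must have all coordinates off $[-1,1]$ (``which suffices since the minimum of $g$ over $S$ is positive'') is false in general --- for $p=1$ and $d\ge2$ the minimum is attained at $(\pm ir,0,\dots,0)$, whose vanishing coordinates lie in $[-1,1]$, yet the minimum value is positive. This does not affect the lemma itself, which only characterizes critical points of the smooth restriction (points with some coordinate in $[-1,1]$ are treated separately as boundary points in Lemmas \ref{q=inf} and \ref{d=2Case}), but the justification that the Lagrange analysis ``suffices'' should not rest on that claim.
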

\begin{proof} We consider the objective function 
\[ F(x_1,y_1,\cdots,x_d,y_d):=\bigl(V_{P_q,K}(z)\bigr)^p=\sum_{j=1}^d f(z_j)^p\]
where $f(\zeta):=\log(|\zeta+\sqrt{\zeta^2-1}|),$ and separate the constraint
$\sum_{j=1}^dz_j^2=-r^2$ into its real and imaginary parts 
as
\begin{align*}
g_1(x_1,y_1,x_2,y_2,\cdots,x_d,y_d)&:=r^2+\sum_{j=1}^d (x_j^2-y_j^2)=0,\\
g_2(x_1,y_1,x_2,y_2,\cdots,x_d,y_d)&:=\sum_{j=1}^d x_jy_j = 0,
\end{align*}
where we have written $z_j=x_j+iy_j,$ $x_j,y_j\in\RR,$ $1\le j\le d.$

Then we may calculate
\[\nabla F=p\langle (f(z_1))^{p-1}\nabla f(z_1),\cdots,(f(z_d))^{p-1}\nabla f(z_d)\rangle \]
and
\begin{align*}
\nabla g_1&=2\langle x_1,-y_1,x_2,-y_2,\cdots,x_d,-y_d\rangle,\nonumber \\
\nabla g_2&=\langle y_1,x_1,y_2,x_2,\cdots,y_d,x_d\rangle.
\end{align*}
If we write the Lagrange multiplier conditions for a critical point as
\[ \frac{1}{p}\nabla F=\lambda_1 \frac{1}{2} \nabla g_1+\lambda_2\nabla g_2,\quad \lambda_1,\lambda_2\in\RR,\]
then critical points are characterized by
\[ (f(z_j))^{p-1}\nabla f(z_j)= \lambda_1 \langle x_j,-y_j\rangle
+\lambda_2  \langle y_j,x_j\rangle,\quad 1\le j\le d.\]
Treating the gradients as column vectors this latter condition may be expressed
in matrix form as
\[
(f(z_j))^{p-1}\nabla f(z_j)=
\begin{pmatrix} x_j & y_j \cr -y_j& x_j \end{pmatrix} 
\begin{pmatrix}\lambda_1 \cr \lambda_2\end{pmatrix},\quad 1\le j\le d
\]
iff 
\[ (f(z_j))^{p-1} \begin{pmatrix} x_j & y_j \cr -y_j& x_j \end{pmatrix} ^{-1} \nabla f(z_j)=\begin{pmatrix}\lambda_1 \cr \lambda_2\end{pmatrix},\quad 1\le j\le d
\]
iff 
\[(f(z_j))^{p-1}\frac{1}{x_j^2+y_j^2}
 \begin{pmatrix} x_j & -y_j \cr y_j& x_j \end{pmatrix} 
 \nabla f(z_j) =\begin{pmatrix}\lambda_1 \cr \lambda_2\end{pmatrix},\quad 1\le j\le d.\]
Consequently, 
\begin{equation}\label{CritPoint1}
(f(z_j))^{p-1} \frac{1}{|z_j|^2}
\begin{pmatrix} x_j & -y_j \cr y_j& x_j \end{pmatrix} 
\nabla f(z_j) = (f(z_{j'}))^{p-1} \frac{1}{|z_{j'}|^2}
\begin{pmatrix} x_{j'} & -y_{j'} \cr y_{j'}& x_{j'} \end{pmatrix} 
\nabla f(z_{j'})
\end{equation}
for $1\le j,j'\le d.$ 

We now proceed to calculate $\nabla f(\zeta)$. To this end, write
 \[f(\zeta)=\frac{1}{2}\log (|\zeta+\sqrt{\zeta^2-1}|^2) =\frac{1}{2}\log (h(\zeta)h(\overline{\zeta})) =\frac{1}{2}\{\log(h(\zeta))+\log(h(\overline{\zeta}))\}\]
with $h(\zeta):=\zeta+\sqrt{\zeta^2-1}.$ Note that
\[h'(\zeta)=1+\zeta/\sqrt{\zeta^2-1}=h(\zeta)/\sqrt{\zeta^2-1}.\]
Then
\begin{align*}
\frac{\partial f}{\partial x}&=\frac{1}{2}\left\{\frac{h'(\zeta)}{h(\zeta)}+\frac{h'(\overline{\zeta})}{h(\overline{\zeta})}\right\}\\
&=\frac{1}{2}\left\{\frac{1}{\sqrt{\zeta^2-1}}+\frac{1}{\sqrt{\overline{\zeta}^2-1}}\right\}\\
&=\Re\left\{\frac{1}{\sqrt{\zeta^2-1}}\right\}\\
\end{align*}
and, similarly,
\begin{align*}
\frac{\partial f}{\partial y}&= 
\frac{1}{2}\left\{i \frac{h'(\zeta)}{h(\zeta)}-i \frac{h'(\overline{\zeta})}{h(\overline{\zeta})}\right\}\\
&=-\Im \left\{\frac{1}{\sqrt{\zeta^2-1}}\right\}.
\end{align*}
Hence,
\[\nabla f(\zeta)= \begin{pmatrix}
\Re\left\{\frac{1}{\sqrt{\zeta^2-1}}\right\} \cr
-\Im \left\{\frac{1}{\sqrt{\zeta^2-1}}\right\}.
\end{pmatrix}
\]
Therefore
\begin{align*}
\frac{1}{|\zeta|^2} \begin{pmatrix} x & -y \cr y& x \end{pmatrix} 
 \nabla f(\zeta) &= 
\frac{1}{|\zeta|^2}\begin{pmatrix} x & -y \cr y& x \end{pmatrix} 
 \begin{pmatrix}
\Re\left\{\frac{1}{\sqrt{\zeta^2-1}}\right\} \cr
-\Im \left\{\frac{1}{\sqrt{\zeta^2-1}}\right\}
\end{pmatrix}\\
&=\frac{1}{|\zeta|^2}
\begin{pmatrix}  x\Re\left\{\frac{1}{\sqrt{\zeta^2-1}}\right\} 
+ y\Im \left\{\frac{1}{\sqrt{\zeta^2-1}}\right\} \cr
y \Re\left\{\frac{1}{\sqrt{\zeta^2-1}}\right\} -
x \Im \left\{\frac{1}{\sqrt{\zeta^2-1}}\right\} \end{pmatrix}\\
&=\frac{1}{|\zeta|^2} 
\begin{pmatrix}
\Re\left\{ \zeta \frac{1}{\sqrt{\overline{\zeta}^2-1}}\right\}\cr
\Im\left\{ \zeta \frac{1}{\sqrt{\overline{\zeta}^2-1}}\right\}\cr
\end{pmatrix}\\
&=\frac{1}{|\zeta|^2} \zeta \frac{1}{\sqrt{\overline{\zeta}^2-1}}\\
&=\frac{1}{\overline{\zeta}\sqrt{\overline{\zeta}^2-1}}.
\end{align*}

Substituting this into the critical point condition (\ref{CritPoint1}) and taking conjugates gives the result. \end{proof}

\bigskip
\begin{lemma} \label{q=inf} For $q=\infty,$ $p=q'=1,$ (corresponding to the tensor-product (max) degree case) 
\[R(P_\infty,K)=r+\sqrt{r^2+1}.\]
\end{lemma}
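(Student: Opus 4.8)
The plan is to evaluate $\min_{z\in S}V_{P_\infty,K}(z)$ directly, which by Example~\ref{example1} equals $\log R(P_\infty,K)$. For $q=\infty$ we have $q'=1$, so $(\ref{VPq})$ becomes
$$V_{P_\infty,K}(z_1,\dots,z_d)=\sum_{j=1}^{d}f(z_j),\qquad f(\zeta):=\log\bigl|\zeta+\sqrt{\zeta^2-1}\bigr|=V_{[-1,1]}(\zeta),$$
so $f\ge 0$ and $f\equiv 0$ on $[-1,1]$. Since $z^{*}:=(ir,0,\dots,0)\in S$ and there $\sum_j f(z^{*}_j)=f(ir)=\log(r+\sqrt{r^2+1})$, it suffices to prove the matching lower bound $\sum_{j=1}^{d}f(z_j)\ge\log(r+\sqrt{r^2+1})$ for every $z\in S$.

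The first ingredient is a one-variable estimate, obtained by running the computation in the proof of Lemma~\ref{q=1} with a free parameter $t$ in place of $r'$: for all $\zeta\in\CC$,
$$f(\zeta)\ \ge\ G\bigl(\tau(\zeta)\bigr),\qquad G(\tau):=\log\bigl(\sqrt{\tau}+\sqrt{\tau+1}\bigr),\quad \tau(\zeta):=\max\bigl(0,-\Re(\zeta^2)\bigr).$$
Indeed, if $\tau(\zeta)=0$ this is just $f\ge 0$; and if $t:=\sqrt{\tau(\zeta)}>0$, then with $\rho:=t+\sqrt{t^2+1}$ the confocal ellipse $E_\rho$ has semi-axes $a=\sqrt{t^2+1}$ and $b=t$, so the closed region $\{f\le\log\rho\}$ satisfies $|\Im\zeta|\le t$ with equality only at $\zeta=\pm it$; hence on that region $\Re(\zeta^2)=(\Re\zeta)^2-(\Im\zeta)^2\ge-t^2$ with equality only at $\zeta=\pm it$, and therefore $\Re(\zeta^2)=-t^2$ forces $f(\zeta)\ge\log\rho=G(\tau(\zeta))$.

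Now fix $z\in S$ and put $\tau_j:=\tau(z_j)$. Then $\tau_j\ge-\Re(z_j^2)$, so $\sum_j\tau_j\ge-\Re\bigl(\sum_j z_j^2\bigr)=r^2$, and by the pointwise estimate $V_{P_\infty,K}(z)\ge\sum_{j}G(\tau_j)$. A short check gives $G(0)=0$ and $G'(\tau)=\tfrac1{2\sqrt{\tau(\tau+1)}}$, so $G$ is increasing and concave on $[0,\infty)$; a concave function vanishing at the origin is subadditive, hence by induction $\sum_{j}G(\tau_j)\ge G\bigl(\sum_j\tau_j\bigr)\ge G(r^2)=\log(r+\sqrt{r^2+1})$. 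This yields the desired lower bound, so $R(P_\infty,K)=r+\sqrt{r^2+1}$.

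The main obstacle is identifying the value of the minimum. In the total-degree case $V_{P_1,K}=\max_j f(z_j)$, and a pigeonhole argument on the signs of $\Re(z_j^2)$ immediately forces a single coordinate onto a large ellipse; for $q=\infty$ one has instead $V_{P_\infty,K}=\sum_j f(z_j)$, and the naive pigeonhole argument loses a $d$-dependent factor. The fix is precisely to pair the \emph{quantitative} pointwise bound $f(\zeta)\ge G(\tau(\zeta))$ with the concavity (equivalently, subadditivity) of $G$, which exactly balances the budget $\sum_j\tau_j\ge r^2$; coordinates with $z_j\in[-1,1]$ cause no difficulty because there $\tau_j=0=f(z_j)$. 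Alternatively one could specialize the Lagrange-multiplier characterization of Lemma~\ref{CharCritPts} to $p=1$, but then one must classify all critical points of $\sum_j f(z_j)$ on the noncompact variety $S$ --- including the non-smooth locus $\{z_j\in[-1,1]\}$ --- which is messier than the global argument above.
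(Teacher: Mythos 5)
Your proof is correct, and it takes a genuinely different route from the paper. The paper specializes the Lagrange-multiplier characterization of Lemma \ref{CharCritPts} to $p=1$, deduces that at a critical point either $z_j^2=z_{j'}^2$ or $z_j^2+z_{j'}^2=1$, and then argues by induction on the dimension with a three-way case analysis (all $z_j^2$ equal, giving $z_j=\pm ir/\sqrt d$ and an ODE-comparison to show $(r/\sqrt d+\sqrt{r^2/d+1})^d\ge r+\sqrt{r^2+1}$; a pair with $z_j^2+z_{j'}^2=1$, reducing to dimension $d-2$ with $r'=\sqrt{r^2+1}$; or a boundary coordinate $z_j\in[-1,1]$, reducing to dimension $d-1$). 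You avoid the critical-point classification entirely: your quantitative one-variable bound $f(\zeta)\ge G(\max(0,-\Re(\zeta^2)))$ with $G(\tau)=\log(\sqrt{\tau}+\sqrt{\tau+1})$ is just the ellipse geometry already used in Lemma \ref{q=1} run with a free parameter, and since $G$ is increasing, concave and vanishes at $0$ (hence subadditive, as $G'(\tau)=\frac{1}{2\sqrt{\tau(\tau+1)}}$ is decreasing), the constraint $\sum_j(-\Re(z_j^2))=r^2$ converts directly into $\sum_j f(z_j)\ge G(r^2)=\log(r+\sqrt{r^2+1})$, matched by the witness $(ir,0,\dots,0)\in S$. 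What each approach buys: yours is shorter and global, treats coordinates in $[-1,1]$ with no separate case, needs no discussion of attainment or exhaustiveness of critical points on the noncompact variety $S$, and makes the dimension-independence of the answer transparent; the paper's route fits the machinery it has already set up and is the one that extends to other exponents (the same apparatus is used for $p=2$ in Lemma \ref{d=2Case}), where the concavity/subadditivity trick is not available. A small further point in your favor: your bookkeeping of the two inequalities is the right way around --- the witness point gives $\min_{z\in S}V_{P_\infty,K}(z)\le\log(r+\sqrt{r^2+1})$, i.e., $R(P_\infty,K)\le r+\sqrt{r^2+1}$, and the pointwise lower bound gives the reverse --- whereas the paper's phrasing attributes the inequality $R(P_\infty,K)\ge r+\sqrt{r^2+1}$ to the witness point, which is the swapped direction.
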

\begin{proof} 
By Lemma \ref{CharCritPts} for $p=1,$ critical points outside $[-1,1]^d$ are characterized by $z\in \CC^d$ such that
$z^2=-r^2$ and
\[\frac{1}{z_j\sqrt{z_j^2-1}}=
\frac{1}{z_{j'}\sqrt{z_{j'}^2-1}},\]
$1\le j,j'\le d.$  Squaring,  we see that it is necessary that
\[z_j^2(z_j^2-1)=z_{j'}^2(z_{j'}^2-1),\quad 1\le j,j'\le d.\]
However,
\[z_j^2(z_j^2-1)-z_{j'}^2(z_{j'}^2-1)=(z_j^2-z_{j'}^2)(z_j^2+z_{j'}^2-1)\]
and so either $z_j^2=z_{j'}^2$ or else $z_j^2+z_{j'}^2=1.$ 

To complete the proof  that the minimum of $F(z)=\exp(V_{P_\infty,K}(z))$ on the singular set is indeed 
$r+\sqrt{r^2+1},$ as claimed, we proceed by induction on the dimension $d.$ For dimension $d=1$ there is nothing to do. Hence, suppose that the result holds for  any $r>0$ and dimension strictly less that $d.$ We must show that it also holds in dimension $d.$ First note that for $z_1=ir$ and $z_2=z_3=\cdots=0,$ $F(z)=r+\sqrt{r^2+1}$ and hence $R(P_\infty,K)\ge r+\sqrt{r^2+1}.$
To show the reverse inequality there are three possibilities to consider:
\begin{enumerate}
\item $z\in S$ is a critical point for which $z_1^2=\cdots=z_d^2;$
\item $z\in S$ is a critical point for which there is a pair $j'\neq j$ such that
$z_j^2+z_{j'}^2=1;$
\item $z\in S$ is a boundary point, i.e, $z_j\in[-1,1]$ for some $j.$
\end{enumerate}

In case (1) we have $-r^2=\sum_{k=1}^dz_k^2=dz_j^2,$ i.e.,
\[z_j=\pm i r/\sqrt{d},\quad 1\le j\le d.\]
The value of $F(z)$ in this case is
\begin{equation}\label{CritValue}
F(z)=\prod_{j=1}^d |z_j+\sqrt{z_j^2-1}|=(r/\sqrt{d}+\sqrt{r^2/d+1})^{d}.
\end{equation}
But
\[r+\sqrt{r^2+1} \le (r/\sqrt{d}+\sqrt{r^2/d+1})^{d}\]
as the first term is the solution of the ODE $y'(r)=(1/\sqrt{r^2+1})y(r),$ $y(0)=1$ and the second of the ODE $y'(r)=(d/\sqrt{r^2+d})y(r),$ $y(0)=1$ with higher growth factor: $(d/\sqrt{r^2+d})>(1/\sqrt{r^2+1}).$ Thus such critical points are not candidates for the minimum.

In case (2) we may suppose that we have $z_{d-1}+z_d^2=1.$ Then for $z\in S,$
\[\sum_{j=1}^{d-2} z_j^2=-r^2-1=-(r')^2,\quad r':=\sqrt{r^2+1},\] and so by the $(d-2)$-dimensional case and the fact that $|z+\sqrt{z^2-1}|\ge1,$
\[F(z)\ge \min_{\sum_{j=1}^{d-2}z_j^2=-(r')^2}\prod_{j=1}^{d-2}
\left|z_j+\sqrt{z_j^2-1}\right|\ge r'+\sqrt{(r')^2+1}>r+\sqrt{r^2+1}.\]
Hence neither are such critical points candidates for the minimum.

Finally, for case (3), 
a boundary point has at least one of its coordinates in
$[-1,1].$ Without loss of generality we may assume that $z_d\in[-1,1].$ Then
$|z_d+\sqrt{z_d^2-1}|=1$ and
$z\in S$ iff $\sum_{j=1}^d z_j^2=-r^2,$ iff $\sum_{j=1}^{d-1}z_j^2=-r^2-z_d^2=-(r')^2$ with $r':=r^2+z_d^2\ge r^2.$ In other words
$z':=(z_1,\cdots,z_{d-1})\in\CC^{d-1}$ is on the $(d-1)$-dimensional singular set
$\sum_{j=1}^{d-1}z_j^2=-(r')^2.$ Hence, by the $(d-1)$-dimensional case the minimum of $F(z)$ (with $z_d\in[-1,1]$) is $r'+\sqrt{(r')^2+1}.$ Clearly, this is minimized for $z_d=0,$ in which case $r'=r$ and we are done. \end{proof}

\bigskip

\begin{lemma}\label{d=2Case}
Suppose that $d=2$ and let $p=q'.$ Then 
\[R(P_q,K)= r+\sqrt{r^2+1},\quad q\ge2\,\,(\hbox{i.e.}, \ p\le2).
\]
\end{lemma}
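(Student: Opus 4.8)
The plan is to combine the critical-point characterization from Lemma \ref{CharCritPts} with the same trichotomy (interior critical points with $z_1^2=z_2^2$, interior critical points with $z_1^2+z_2^2=1$, and boundary points) that drove the proof of Lemma \ref{q=inf}, but now track how the constraint $p\le 2$ enters. First I would record the lower bound: taking $z_1=ir$, $z_2=0$ gives $V_{P_q,K}(z)=\log(r+\sqrt{r^2+1})$ (the $z_2$-term vanishes), so $R(P_q,K)\ge r+\sqrt{r^2+1}$ regardless of $q$. All the work is in the reverse inequality, i.e. showing no admissible point on $S=\{z_1^2+z_2^2=-r^2\}$ gives a strictly smaller value of $\exp(V_{P_q,K})$.

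For the boundary case, if $z_2\in[-1,1]$ then $|z_2+\sqrt{z_2^2-1}|=1$, so $V_{P_q,K}(z)=f(z_1)$ with $z_1^2=-r^2-z_2^2=-(r')^2$, $r'\ge r$; thus $z_1=\pm i r'$ and the value is $\log(r'+\sqrt{(r')^2+1})\ge\log(r+\sqrt{r^2+1})$, minimized at $z_2=0$. For the second interior case, $z_1^2+z_2^2=1$ together with $z_1^2+z_2^2=-r^2$ is impossible since $r>0$, so in dimension $2$ this case is vacuous — a simplification over the general-$d$ argument. So the crux is the first case: at a critical point with $z_1^2=z_2^2=-r^2/2$, i.e. $z_1,z_2\in\{\pm ir/\sqrt2\}$, the value is $F(z)=(r/\sqrt2+\sqrt{r^2/2+1})^2$, and by the ODE-comparison already used in Lemma \ref{q=inf} (growth factors $2/\sqrt{r^2+2}>1/\sqrt{r^2+1}$) this is $\ge r+\sqrt{r^2+1}$.

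The subtlety — and the main obstacle — is that Lemma \ref{CharCritPts} characterizes critical points of $F$ as a constrained extremum, but the minimum of $\exp(V_{P_q,K})$ over $S$ need not be attained at a point where $F=(V_{P_q,K})^p$ is differentiable with the stated Lagrange condition, nor need every solution of $m(z_1)=m(z_2)$ with $z\in S$ fall into the two algebraic cases $z_1^2=z_2^2$ or $z_1^2+z_2^2=1$ derived by squaring. Squaring $m(z_1)=m(z_2)$ only yields $\bigl(f(z_1)\bigr)^{2(p-1)}/\bigl(z_1^2(z_1^2-1)\bigr)=\bigl(f(z_2)\bigr)^{2(p-1)}/\bigl(z_2^2(z_2^2-1)\bigr)$, which for $p<2$ is transcendental, not polynomial, so the clean factorization used when $p=1$ is unavailable. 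The honest route is therefore: use compactness of $\bar\Omega_{2(r+\sqrt{r^2+1})}\cap S$ to get a minimizer $z^*$; handle the boundary case as above; and in the interior show that any minimizer must have $|z_1|=|z_2|$, hence (being on $S$) $z_1^2=z_2^2=-r^2/2$. For the latter I would argue by symmetry and convexity: since $q\ge 2$ means $p=q'\le 2$ so that $t\mapsto t^{2/p}$ is convex (recall $(V_{P_q,K})^p=f(z_1)^p+f(z_2)^p$, so $(V_{P_q,K})^2=\bigl(f(z_1)^p+f(z_2)^p\bigr)^{2/p}$), and for $z\in S$ the product $f(z_1)+f(z_2)\ge\log|z_1+\sqrt{z_1^2-1}|+\log|z_2+\sqrt{z_2^2-1}|$ is governed by the $d=2$ total-degree-type estimate, one pushes any imbalance between the two coordinates toward the balanced configuration while decreasing the $\ell^p$-norm — the key monotonicity being that for fixed $\ell^1$-sum the $\ell^p$-norm with $p\le 2$...

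Actually, let me reconsider: the cleanest argument reduces everything to the two cases via Lemma \ref{q=inf} and Lemma \ref{q=1}. Since $1\le p\le 2$ interpolates between the $q=\infty$ case ($p=1$) and a situation where $q=2$ ($p=2$), and in $d=2$ both endpoints of the relevant comparison give $R=r+\sqrt{r^2+1}$ (for $p=1$ by Lemma \ref{q=inf}; for the balanced critical value one checks $(r/\sqrt2+\sqrt{r^2/2+1})^2\ge r+\sqrt{r^2+1}$ directly), I would show the minimum over $S$ of $f(z_1)^p+f(z_2)^p$ subject to $z_1^2+z_2^2=-r^2$ is, by Lemma \ref{CharCritPts} and the boundary analysis, either attained at the balanced point $z_1^2=z_2^2=-r^2/2$ (value $2\,\bigl(\tfrac12\log(r^2/2+1+r\sqrt{r^2/2+1})\bigr)^p$, wait — value $\ge r+\sqrt{r^2+1}$ after exponentiation by the ODE comparison) or at a boundary point (value $\ge r+\sqrt{r^2+1}$). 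The hard part is genuinely the claim that these are the only candidates: one must rule out interior critical points with $|z_1|\ne|z_2|$. For $p=1$ this was done by the factorization $z_1^2(z_1^2-1)-z_2^2(z_2^2-1)=(z_1^2-z_2^2)(z_1^2+z_2^2-1)$ forcing $z_1^2=z_2^2$ (as $z_1^2+z_2^2=1$ is incompatible with $S$); for $1<p\le 2$ I expect one must instead verify directly that the function $t\mapsto \bigl(f(it)\bigr)^{p-1}\bigl/\bigl(it\sqrt{-t^2-1}\bigr)$ — restricted to the relevant purely-imaginary arc of $S$ — is strictly monotone, so $m(z_1)=m(z_2)$ with $z_1,z_2$ on that arc forces $z_1=z_2$; then $z_1^2+z_2^2=-r^2$ gives the balanced point. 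This monotonicity computation is the step I expect to be the true obstacle, and it is exactly where the restriction $p\le 2$ (equivalently $q\ge 2$) should be essential.
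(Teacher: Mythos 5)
There is a genuine gap: your proposal never actually closes the argument for any value of $q\ge 2$, and it misses the device the paper uses to avoid the very obstacle you flag. The paper does \emph{not} attempt a Lagrange-multiplier analysis for general $1<p<2$. It proves only the $p=2$ case by hand and then gets every $q\ge 2$ for free from monotonicity of $\ell_p$ norms: since $V_{P_\infty,K}\ge V_{P_q,K}\ge V_{P_2,K}$ pointwise, the three minima over $S$ are sandwiched, and the outer two both equal $\log(r+\sqrt{r^2+1})$ by Lemma \ref{q=inf} and the $p=2$ case. Your plan, by contrast, tries to classify critical points for intermediate $p$ and ends by naming the required monotonicity of $t\mapsto (f(it))^{p-1}/(it\sqrt{-t^2-1})$ as an unproven ``true obstacle''; moreover that step would only help after one knows interior minimizers have $z_1,z_2$ purely imaginary, which you assume rather than prove. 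In the paper this reality statement ($z_1^2,z_2^2\in\RR$ at critical points) is itself the place where $p=2$ is used: squaring the critical-point identity of Lemma \ref{CharCritPts} makes the transcendental factor $(f(z_1)/f(z_2))^{2(p-1)}$ into the exact square of a ratio of logarithms only when $p=2$, and the substitution $u=z_2^2$ with $z_1^2=-r^2-u$ plus a discriminant argument then forces $u\in\RR$. Nothing in your sketch substitutes for this.

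A second concrete error: at the balanced point $z_1=z_2=ir/\sqrt2$ the value of $\exp(V_{P_q,K})$ is $\bigl(r/\sqrt2+\sqrt{r^2/2+1}\bigr)^{2^{1/p}}$ (for $p=2$, exponent $\sqrt2$), not $\bigl(r/\sqrt2+\sqrt{r^2/2+1}\bigr)^{2}$ as you wrote; the exponent $2$ is the $q=\infty$ (product) value. Consequently the ODE comparison you import from Lemma \ref{q=inf}, which yields $\bigl(r/\sqrt2+\sqrt{r^2/2+1}\bigr)^{2}\ge r+\sqrt{r^2+1}$, is strictly weaker than what is needed; one must prove $\sqrt2\,\log\bigl(r/\sqrt2+\sqrt{r^2/2+1}\bigr)\ge \log\bigl(r+\sqrt{r^2+1}\bigr)$, which the paper does by showing $G(y)=y\int_0^{r/y}(t^2+1)^{-1/2}\,dt$ is increasing (equivalently, concavity of the inverse hyperbolic sine). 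Your boundary-point analysis and the observation that the case $z_1^2+z_2^2=1$ is incompatible with $S$ are fine, and the strictly-decreasing function you would need on the imaginary arc is essentially the paper's $g(y)=\log(y+\sqrt{y^2+1})/(y\sqrt{y^2+1})$ (proved decreasing by an averaging argument) --- but in the paper that step is only invoked for $p=2$, after reality of $z_1^2,z_2^2$ is established; as written your proposal neither establishes that reduction nor supplies the sandwich that makes the general-$q$ case follow.
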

\begin{proof} We first prove the $p=2$ case.
By Lemma \ref{CharCritPts} the critical points are characterized by the condition
\begin{equation}\label{CritPtsp=2}
\log(|z_1+\sqrt{z_1^2-1}|)\frac{1}{z_1\sqrt{z_1^2-1}}
=\log(|z_{2}+\sqrt{z_{2}^2-1}|)\frac{1}{z_{2}\sqrt{z_{2}^2-1}}.
\end{equation} 
We may assume that
\[\log(|z_1+\sqrt{z_1^2-1}|)\ge \log(|z_{2}+\sqrt{z_{2}^2-1}|)\]
so that
\[\frac{z_1^2(z_1^2-1)}{z_2^2(z_2^2-1)}=\left(
\frac{\log(|z_1+\sqrt{z_1^2-1}|)}
{\log(|z_{2}+\sqrt{z_{2}^2-1}|)}\right)^2\ge 1.\]
But on the singular set $z_1^2=-r^2-z_2^2$ so we have
\[\frac{(r^2+z_2^2)(r^2+1+z_2^2)}{z_2^2(z_2^2-1)}\ge 1.\]
Setting $u=z_2^2,$ we have
\[\frac{(r^2+u)(r^2+1+u)}{u(u-1)}=\frac{u^2+(2r^2+1)u+r^2(r^2+1)}
{u^2-u}\ge 1,\]
which, upon dividing, becomes
\[1+\frac{2(r^2+1)u+r^2(r^2+1)}{u^2-u}\ge 1,\]
i.e., after dividing by $r^2+1,$
\[x:=\frac{2u+r^2}{u^2-u}\ge 0.\]
Then, cross-multiplying, we have
\[xu^2-(x+2)u-r^2=0,\quad x\ge0.\]
Since the discriminant of this quadratic is $D=(x+2)^2+4xr^2\ge0,$
it follows that $z_2^2=u\in\RR$ and hence $z_1^2=-r^2-z_2^2\in\RR,$ as well.

We now analyze the various possibilities for the minimum. Consider first
a boundary point where one of the coordinates, say $z_1,$ is in $[-1,1].$ In that case $\log(|z_1+\sqrt{z_1^2-1}|)=0,$ and 
\[\exp(V_{P_2,K}(z))=\exp(\|[0,\log(|z_2+\sqrt{z_2^2-1}|)]\|_{2})=
|z_2+\sqrt{z_2^2-1}|.\]
But \[z_2^2=-r^2-z_1^2=-(r')^2,\quad r':=\sqrt{r^2+z_1^2}\ge r\]
so that $z_2=\pm i r',$ and
\[|z_2+\sqrt{z_2^2-1}|=r'+\sqrt{(r')^2+1}\]
which is clearly minimized when $z_1=0$ in which case
\[\exp(V_{P_2,K}(z))=r+\sqrt{r^2+1}.\]
Thus $r+\sqrt{r^2+1}$ is the minimum value of $\exp(V_{P_2,K}(z))$ over boundary points.

Consider now the critical points. As reported above, in this case we must have
$z_1^2,z_2^2\in\RR.$ If say $z_1^2\ge0$ then just as in the boundary case
\[z_2^2=-r^2-z_1^2=-(r')^2,\quad r':=\sqrt{r^2+z_1^2}\ge r\]
and 
\begin{align*}
\exp(V_{P_2,K}(z))&\ge\exp(\|[0,\log(|z_2+\sqrt{z_2^2-1}|)]\|_{2})\\
&=|z_2+\sqrt{z_2^2-1}|\\
&\ge r+\sqrt{r^2+1}
\end{align*}
and so this case is not a candidate for the minimum. Hence we assume that
both $z_1^2<0$ and $z_2^2<0.$ Since the univariate extremal function is invariant under $z\mapsto -z,$ we may write $z_1=iy_1,$ $z_2=iy_2$ with $y_1,y_2\ge 0.$ The critical point condition (\ref{CritPtsp=2}) then reduces to
\[\log(y_1+\sqrt{y_1^2+1})\frac{1}{y_1\sqrt{y_1^2+1}}
= \log(y_2+\sqrt{y_2^2+1})\frac{1}{y_2\sqrt{y_2^2+1}}.\]
But the function
\[g(y):= \log(y+\sqrt{y^2+1})\frac{1}{y\sqrt{y^2+1}}=
\frac{1}{\sqrt{y^2+1}}\left(\frac{1}{y}\int_0^y \frac{1}{\sqrt{t^2+1}}\,dt\right)\]
is the product of two positive strictly decreasing functions; the function in parentheses being the average over the interval $[0,y]$ of a strictly decreasing function. Hence $g(y)$ is also strictly decreasing and the critical point condition therefore requires that $y_1=y_2.$ As $y_1^2+y_2^2=r^2,$ indeed
$y_1=y_2=r/\sqrt{2}.$

At this critical point 
\begin{align*}
\exp(V_{P_2,K}(z))&=\exp(\|[\log(r/\sqrt{2} + \sqrt{r^2/2+1}),\log(r/\sqrt{2} + \sqrt{r^2/2+1})]\|_{2})\\
&=\exp(2^{1/2}\log(r/\sqrt{2} + \sqrt{r^2/2+1}))\\
&=\bigl(r/\sqrt{2} + \sqrt{r^2/2+1}\bigr)^{\sqrt{2}}.
\end{align*}
We claim that this latter quantity is greater than $r+\sqrt{r^2+1}.$ Indeed,
taking logarithms, we claim that
\[\sqrt{2}\log(r/\sqrt{2} + \sqrt{r^2/2+1})\ge \log(r+\sqrt{r^2+1})\]
or, equivalently, that
\[\sqrt{2}\int_0^{r/\sqrt{2}}\frac{1}{\sqrt{t^2+1}}\,dt\ge 
\int_0^{r}\frac{1}{\sqrt{t^2+1}}\,dt.\]
Consider
\[G(y):=y\int_0^{r/y} \frac{1}{\sqrt{t^2+1}}\,dt\]
for $y\ge1.$
Then 
\[G'(y)= \int_0^{r/y} \frac{1}{\sqrt{t^2+1}}\,dt -\left(\frac{r}{y}\right)
\frac{1}{\sqrt{(r/y)^2+1}}\ge 0\]
as the integrand $1/\sqrt{t^2+1}$ is decreasing. Hence $G(y)$ is increasing and,
in particular, $G(\sqrt{2})\ge G(1),$ and the Lemma is proved for the $p=2$
case.

For $p\le 2$ ($q\ge2$), the monotonicity of $\ell_p$ norms implies that
\[V_{P_\infty,K}(z)\ge V_{P_q,K}(z)\ge V_{P_2,K}(z)\]
and so
\[\min_{z\in S} \exp(V_{P_\infty,K}(z)) \ge\min_{z\in S} \exp(V_{P_q,K}(z)) \ge \min_{z\in S} \exp(V_{P_2,K}(z)).\] 
From Lemma \ref{q=inf} and the $p=2$ case,
\[\min_{z\in S} \exp(V_{P_\infty,K}(z)) =\min_{z\in S} \exp(V_{P_2,K}(z))
=r+\sqrt{r^2+1}\]
and the result follows.
\end{proof}
As
\[R(P_1,K)=r/\sqrt{d}+\sqrt{1+r^2/d}<r+\sqrt{1+r^2}=R(P_2,K)=R(P_\infty,K)\]
the approximation order of the Euclidean degree is considerably higher than for the
total degree, while the use of tensor-product degree provides no additional advantage, as reported in \cite{T}.

It is also interesting to note that $R(P_1,K)$ decreases to $1$ as the dimension increases to $\infty$ while for $q\ge2$, $R(P_q,K)$  is independent of the dimension $d$ indicating that the rate of polynomial approximation using the total degree degenerates for higher dimensions  while for the Euclidean and tensor-product degree it does not. 

However this is not a completely fair comparison. The dimension of the spaces
$\{p\,:\,{\rm deg_P}(p)\le n\}$ are proportional (asymptotically) to the
volume ${\rm vol}_d(P);$ indeed, 
$$\hbox{dim}(\{p\,:\,{\rm deg_P}(p)\le n\})=\hbox{dim}(Poly(nP))\asymp {\rm vol}_d(P)\cdot n^d.$$ 
To equalize their dimensions we may scale $P_q$ by
\[c=c(q)=\left(\frac{{\rm vol}_d(P_1)}{{\rm vol}_d(P_q)}\right)^{1/d}.\]
For example, for $d=2,$ in the Euclidean case we have
\[c(2)=\left(\frac{1/2}{\pi/4}\right)^{1/2}=\sqrt{2/\pi}=0.7979\cdots.\]
By Lemma \ref{Pscaled} we then compare 
\[R(P_1,K)=r/\sqrt{2}+\sqrt{1+r^2/2}\,\,\hbox{and}\,\,
R(P_2,K)^{c(2)}=\bigl(r+\sqrt{1+r^2}\bigr)^{\sqrt{2/\pi}}.\]
We note that for ``small'' $r$ ($r<2.1090\cdots$) $R(P_2,K)^{c(2)}>R(P_1,K)$ and so the Euclidean degree, even in the dimension normalized case, has a better approximation order than the total degree case, albeit with a lesser advantage.
For example, for $r=0.25,$
\[R(P_1,K)=1.19228\cdots\,\,\hbox{and}\,\, R(P_2,K)^{\sqrt{2/\pi}}=1.2182\cdots.\]
Further, for $r$ ``large''  ($r>2.1090\cdots$), $R(P_1,K)>R(P_2,K)^{c(2)}$ so that then the {\it total} degree provides a better order of approximation.

\end{example}

\begin{example}\label{example2}
Consider now the bivariate function
\[f(z_1,z_2):=\frac{1}{(z_1-\alpha)^2+z_2^2}\]
for $\alpha\in\RR$ and $\alpha>1.$ This has a single {\it real} pole at $(z_1,z_2)=(\alpha,0)$ and complex singular set
\[S=S(f):=\{(z_1,z_2)\in\CC^2\,:\,z_2=\pm i(z_1-\alpha)\}.\]
\begin{lemma}\label{2ndExample} We have
\[R(P_q,K)=\begin{cases}
\alpha&\mbox{if } q=1\cr
\alpha-1+\sqrt{(\alpha-1)^2+1}&\mbox{if } q=\infty.
\end{cases} \]
\end{lemma}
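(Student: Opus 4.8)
The plan is to use the identity $\log R(P_q,K)=\min_{z\in S}V_{P_q,K}(z)$ together with the product formula $V_{P_q,K}(z_1,z_2)=\|(V_{[-1,1]}(z_1),V_{[-1,1]}(z_2))\|_{\ell_{q'}}$, where $1/q+1/q'=1$. Writing $f(\zeta):=V_{[-1,1]}(\zeta)=\log|\zeta+\sqrt{\zeta^2-1}|$, this reads $V_{P_1,K}=\max(f(z_1),f(z_2))$ and $V_{P_\infty,K}=f(z_1)+f(z_2)$. Recall the sublevel sets $E_{<\rho}=\{\zeta:(\Re\zeta/a)^2+(\Im\zeta/b)^2<1\}$ of $f$ are interiors of confocal ellipses with $a=(\rho+1/\rho)/2$, $b=(\rho-1/\rho)/2$; note $a+b=\rho$, and $\alpha-b=a$ precisely when $\rho=\alpha$. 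It suffices to minimize over the branch $z_2=i(z_1-\alpha)$, parametrized by $w\mapsto(w,i(w-\alpha))$: the other branch $z_2=-i(z_1-\alpha)$ yields the same infimum since $f(\bar\zeta)=f(\zeta)$ and conjugation in $w$ interchanges the branches.

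For $q=1$ I would argue directly. The point $z_1=(\alpha+1/\alpha)/2$, $z_2=i(z_1-\alpha)=-i(\alpha-1/\alpha)/2$ lies on $S$, and since $z_1$ is the right-hand vertex of $E_\alpha$ and $z_2$ the bottom vertex of $E_\alpha$, we get $f(z_1)=f(z_2)=\log\alpha$, so $\min_S V_{P_1,K}\le\log\alpha$. Conversely, suppose $(z_1,i(z_1-\alpha))\in S$ with $f(z_1)<\log\alpha$ and $f(i(z_1-\alpha))<\log\alpha$. The first forces $z_1\in E_{<\alpha}$, hence $\Re z_1<a$; the second forces $i(z_1-\alpha)\in E_{<\alpha}$, and since $\Im(i(z_1-\alpha))=\Re z_1-\alpha$ this gives $|\Re z_1-\alpha|<b$, hence $\Re z_1>\alpha-b=a$ — a contradiction. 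Thus $\min_S V_{P_1,K}=\log\alpha$ and $R(P_1,K)=\alpha$.

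For $q=\infty$ the key point is that $F(w):=f(w)+f(i(w-\alpha))$ is harmonic on the open set $U:=\{w:w\notin[-1,1],\ i(w-\alpha)\notin[-1,1]\}=\CC\setminus\bigl([-1,1]\cup(\alpha+i[-1,1])\bigr)$, being the sum of the Green's function $f$ (harmonic off $[-1,1]$) and its composition with the holomorphic map $w\mapsto i(w-\alpha)$. Since $F$ is continuous on $\CC$ and $F(w)\ge f(w)\to\infty$ as $|w|\to\infty$, it attains a minimum, and by the minimum principle this minimum is not attained in $U$; hence it occurs at a $w$ with $z_1=w\in[-1,1]$ or with $z_2=i(w-\alpha)\in[-1,1]$. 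In the first case $f(z_1)=0$ and $F=f(i(w-\alpha))=\log\bigl(|w-\alpha|+\sqrt{|w-\alpha|^2+1}\bigr)$, increasing in $|w-\alpha|\in[\alpha-1,\alpha+1]$, so minimal at $w=1$ with value $\log\bigl(\alpha-1+\sqrt{(\alpha-1)^2+1}\bigr)$. In the second case $w=\alpha+is$, $s\in[-1,1]$, so $F=f(\alpha+is)$; using that the $E_{<\rho}$ are nested confocal ellipses, $f$ is minimized on the vertical segment $\{\alpha+is:s\in[-1,1]\}$ at $s=0$, with value $f(\alpha)=\log(\alpha+\sqrt{\alpha^2-1})$, and squaring (using $\alpha>1$) shows $\alpha+\sqrt{\alpha^2-1}\ge\alpha-1+\sqrt{(\alpha-1)^2+1}$. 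Comparing the two cases, $\min_S V_{P_\infty,K}=\log\bigl(\alpha-1+\sqrt{(\alpha-1)^2+1}\bigr)$, i.e. $R(P_\infty,K)=\alpha-1+\sqrt{(\alpha-1)^2+1}$.

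I expect the main obstacle to be the interior critical points in the $q=\infty$ case: the analogue of Lemma \ref{CharCritPts} for the constraint $z_2=i(z_1-\alpha)$ produces a critical point (here $z_1=\alpha/2+1/\alpha$, $z_2=i(1/\alpha-\alpha/2)$) at which $f(z_1)+f(z_2)$ has no convenient closed form, so a head-on comparison with the claimed value is unpleasant. Invoking harmonicity of $F$ on $U$ and the minimum principle eliminates these points entirely and reduces everything to the two one-dimensional boundary computations above. For $q=1$ the only care needed is the open/closed bookkeeping in the separation step, which the inclusion $\{f(\zeta)<\log\alpha\}=E_{<\alpha}\subset\{\Re\zeta<a\}$ makes routine.
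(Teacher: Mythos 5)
Your proposal is correct, and the interesting difference is in the $q=\infty$ half. For $q=1$ your ellipse argument is essentially the paper's: the paper shows $z_1\in E_{<\alpha}$ forces $z_2=\pm i(z_1-\alpha)\in E_{\ge\alpha}$ using the identities $a+b=\alpha$, $\alpha-a=b$ for the semi-axes of $E_\alpha$, which is the same bookkeeping as your two inequalities $\Re z_1<a$ and $|\Re z_1-\alpha|<b$. For $q=\infty$, however, the paper follows the template of Lemma \ref{CharCritPts}: it characterizes the Lagrange critical points on $S$ by $1/\sqrt{z_1^2-1}=-i/\sqrt{z_2^2-1}$ (hence $z_1^2+z_2^2=2$), and then merely asserts that the resulting critical values ``can be shown by elementary (but lengthy!) calculations'' not to compete with the boundary values, before doing the same two boundary-segment computations you do ($z_1\in[-1,1]$ giving $\alpha-1+\sqrt{(\alpha-1)^2+1}$ at $z_1=1$, and $z_2\in[-1,1]$ giving $\alpha+\sqrt{\alpha^2-1}\geq \alpha-1+\sqrt{(\alpha-1)^2+1}$). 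Your route replaces that omitted critical-point comparison entirely: on the branch $z_2=i(w-\alpha)$ the objective $F(w)=f(w)+f(i(w-\alpha))$ is harmonic on $U=\CC\setminus\bigl([-1,1]\cup(\alpha+i[-1,1])\bigr)$, continuous on $\CC$, and tends to $+\infty$, so by the minimum principle no minimizer lies in $U$ (note $U$ is connected; even without that, a minimizing component would force the same value on its boundary, which lies in the two segments), and only the two one-dimensional boundary computations remain; the other branch follows from $f(\bar\zeta)=f(\zeta)$. This is a genuine improvement in completeness: it proves, rather than asserts, that interior critical points are irrelevant, at the cost of being specific to the exponent $q'=1$ (sum of Green functions), whereas the paper's Lagrange-multiplier framework is the one it reuses for general $q'$ elsewhere in Section \ref{sec:app}.
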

\begin{proof} Consider first the $q=1$ case where
\[V_{P_1,K}(z)=\max_{1\le j\le 2}\log\bigl|z_j+\sqrt{z_j^2-1}\bigr|.\]
 We are claiming that
\[ \min_{z\in S} V_{P_1,K}(z)=\log(\alpha).\]
As discussed for Example 1, the level set  $\bigl|z_j+\sqrt{z_j^2-1}\bigr|=\alpha$ is  the ellipse $E_\alpha,$ $(x_j/a)^2+(y_j/b)^2=1$ with
\[a:=\frac{1}{2}\left(\alpha+\frac{1}{\alpha}\right),\,\,
b:=\frac{1}{2}\left(\alpha-\frac{1}{\alpha}\right). \]
Now if $z_1\in E_{\le\alpha}$ then 
\[\left(\frac{x_1}{a}\right)^2+\left(\frac{y_1}{b}\right)^2\le1\]
implies that $|x_1|\le\alpha$ and consequently that $|x_1-\alpha|\ge\alpha-a=b.$
But then  $z_2=\pm i(z_1-\alpha)=\pm (-y_1+i(x_1-\alpha))$ is such that
\[\left(\frac{y_1}{a}\right)^2+\left(\frac{x_1-\alpha}{b}\right)^2
\ge \left(\frac{x_1-\alpha}{b}\right)^2\ge1,\]
i.e., $z_2\in E_{>\alpha }.$

Similarly, one may show that if $z_2\in E_{<\alpha}$ then $z_1\in E_{>\alpha}.$
Consequently the minimum is for $z_1,z_2\in E_\alpha,$ for example
\[z_1=a=\frac{1}{2}\left(\alpha+\frac{1}{\alpha}\right),\,\,
z_2=-bi=-\frac{i}{2}\left(\alpha-\frac{1}{\alpha}\right).\]

We next consider the $q=\infty$ case where $q'=1$ and
\[V_{P_\infty,K}(z)=\sum_{j=1}^2 \log\bigl|z_j+\sqrt{z_j^2-1}\bigr|.\]
By calculations entirely analogous to those of the first example, the Lagrange multiplier critical points for $\min_{z\in S} \exp(V_{P_\infty,K}(z))$ are characterized by the condition that
\[\frac{1}{\sqrt{z_1^2-1}}=-i\frac{1}{\sqrt{z_2^2-1}}\]
from which it follows upon squaring that $z_1^2+z_2^2=2.$ Applying the constraint
$z_2=\pm i(z_1-\alpha)$ results in specific values for these critical points and their corresponding function values can be shown by elementary (but lengthy!) calculations to not be candidates for the minimum.

There remains the case of a boundary point, when of one of $z_1,z_2\in[-1,1].$
If $z_1\in[-1,1]$ then $\log\bigl|z_1+\sqrt{z_1^2-1}\bigr|=0$ and then for $z_2=-i(z_1-\alpha),$
$\log\bigl|\alpha-z_1+\sqrt{(\alpha-z_1)^2+1}\bigr|$
is minimized by $z_1=1$ for which
\[\exp(V_{P_\infty,K}(z))=\alpha-1+\sqrt{(\alpha-1)^2+1}.\]
On the other hand, if $z_2\in[-1,1],$ then $\log\bigl|z_2+\sqrt{z_2^2-1}\bigr|=0$
and $z_1=\alpha+iz_2.$ It is easy to see that then 
$\log\bigl|z_1+\sqrt{z_1^2-1}\bigr|$ is minimized for $z_2=0,$ i.e., $z_1=\alpha.$
But the ellipse $E_\rho$ with $\rho=\alpha-1+\sqrt{(\alpha-1)^2+1}$ has semi-major axis $a=\sqrt{(\alpha-1)^2+1}<\alpha,$ for $\alpha>1.$ Hence $z_1\in E_{>\rho}$ and, in this case,
\[\bigl|z_1+\sqrt{z_1^2-1}\bigr|>\alpha-1+\sqrt{(\alpha-1)^2+1}\]
and this is not a candidate for the minimum.

%Finally, consider the case $q=2$ for which
%\[(V_{P_2,K}(z))^2=\left(\log\bigl|z_1+\sqrt{z_1^2-1}\bigr|\right)^2+
%\left(\log\bigl|z_2+\sqrt{z_2^2-1}\bigr|\right)^2.\]
%By calculations analogous to those of the first example, the Lagrange multiplier
%critical points are characterized by
%\[\left(\log\bigl|z_1+\sqrt{z_1^2-1}\bigr|\right)\frac{1}{\sqrt{z_1^2-1}}
%=\left(\log\bigl|z_2+\sqrt{z_2^2-1}\bigr|\right)\left((-i)\frac{1}{\sqrt{z_2^2-1}}\right).\]
%Squaring, this implies that
%\[\frac{z_1^2-1}{z_2^2-1}=k\in\RR.\]
\end{proof}
Again we have $R(P_1,K)<R(P_\infty,K)$ (note that, by the monotonicity of $\ell_p$ norms, we have $R(P_2,K)\le R(P_\infty)$ and so, at best, $R(P_2,K)= R(P_\infty,K)$). However the gain in approximation order is much less. Indeed, if we write $\alpha=1+\epsilon,$ $\epsilon>0,$ then 
\[R(P_1,K)=1+\epsilon\,\,\hbox{and}\,\,R(P_\infty,K)=\epsilon+\sqrt{1+\epsilon^2}=1+\epsilon +\epsilon^2/2+\cdots.\]
Further, if we normalize the area of $P$ to make the dimesnions of the spaces comparable, we obtain
\[R(P_1,K)=1+\epsilon > (\epsilon +\sqrt{1+\epsilon^2})^{\sqrt{2/\pi}}=1+\sqrt{2/\pi}\epsilon +\cdots\ge
R(P_2,K) \]
even for small $\epsilon.$ In other words, the total degree is then, in this sense, the better option.
\end{example}
\begin{example}\label{example3} In Example \ref{example1}, and from the numerical evidence, also in Example \ref{example2}, it is the case that
\[R(P_2,K)=R(P_\infty,K)\]
indicating that there is no advantage in using the tensor-product degree over the Euclidean degree. This is {\it not} always the case. Indeed, consider the function
\[f(z_1,z_2)=\frac{1}{(z_1-\alpha)^2+(z_2-\alpha)^2},\quad \alpha>1.\]
We report the numerical result that
for $\alpha=5/4$ we obtain
\[R(P_2,K)=2.0518< 2.1531 =R(P_\infty,K).\]

\end{example}

\end{document}